\newcommandx{\at}[2][1=]{\todo[linecolor=red,backgroundcolor=red!25,bordercolor=red,#1]{#2}}
\DeclareMathOperator{\R}{\mathbb{R}}
\DeclareMathOperator{\F}{\mathbb{F}}
\DeclareMathOperator{\C}{\mathbb{C}}
\DeclareMathOperator{\rank}{\text{rank}}
\newcommand{\ignore}[1]{}
\newtheorem{theorem}{Theorem}[]
\newtheorem{lemma}[theorem]{Lemma}
\newtheorem{definition}[theorem]{Definition}
\newtheorem{proposition}[theorem]{Proposition}
\newtheorem{corollary}[theorem]{Corollary}
\journal{Linear Algebra and its Applications}
\begin{document}

\begin{frontmatter}


\title{Fast Randomized Numerical Rank Estimation for Numerically Low-Rank Matrices}

\author{Maike Meier\corref{cor1}}
\ead{meier@maths.ox.ac.uk}

\author{Yuji Nakatsukasa}
\ead{nakatsukasa@maths.ox.ac.uk}

\address{Mathematical Institute, University of Oxford,  Oxford, OX2 6GG, UK}

\cortext[cor1]{Corresponding author}



\begin{abstract}
Matrices with low-rank structure are ubiquitous in scientific computing. Choosing an appropriate rank is a key step in many computational algorithms that exploit low-rank structure. However, estimating the rank has been done largely in an ad-hoc fashion in large-scale settings.  
In this work we develop a randomized algorithm for estimating the numerical rank of a (numerically low-rank) matrix. The algorithm is based on sketching the matrix with random matrices from both left and right; the key fact is that with high probability, the sketches preserve the orders of magnitude of the leading singular values.  We prove a result on the accuracy of the sketched singular values and show that gaps in the spectrum are detected.
For an $m\times n$ $(m\geq n)$ matrix of numerical rank $r$, the algorithm runs with complexity $O(mn\log n+r^3)$, or less for structured matrices. The steps in the algorithm are required as a part of many low-rank algorithms, so the additional work required to estimate the rank can be even smaller in practice.  Numerical experiments illustrate the speed and robustness of our rank estimator.

\end{abstract}



\begin{keyword}
Rank estimation \sep numerical rank \sep randomized algorithm \sep
Marčenko-Pastur rule

\MSC[2020] 65F55\sep 65F99 \sep 68W20 \sep 60B20

\end{keyword}

\end{frontmatter}


\section{Introduction}\label{sec:intro}
Low-rank matrices are ubiquitous in scientific computing and data science. 
Sometimes a matrix of interest can be shown to be of numerically low rank~\cite{beckermann2017singular,udell2019big}, for example by showing that the singular values decay rapidly. 
More often, matrices that arise in applications may have a hidden low-rank structure, such as low-rank off-diagonal blocks~\cite{martinsson2019fast,MartinssonTroppacta}. 
As is well known, low-rank approximation is also the basis for principal component analysis. 

Numerous studies and computational algorithms exploit the (approximately) low-rank structure of these types of matrices to devise efficient algorithms. Such algorithms usually require finding a low-rank approximation of a given matrix.  For large-scale matrices, traditional deterministic algorithms for low-rank approximation based on computing a truncated singular value decomposition (SVD) might be infeasible. This can be caused by the sheer scale of the matrix and the complexity of classical algorithms, as well as the fact that access to the matrix may be restricted because it is stored in RAM or cannot be stored at all (streaming model)~\cite{MartinssonTroppacta}.  Randomized algorithms have become a powerful and reliable tool for efficiently computing a near-best low-rank approximation for such matrices.  
Landmark references on randomized low-rank factorizations are~\cite{woodruff2014sketching} and ~\cite{halko2011finding}, which extensively analyze the randomized SVD.

A key step in low-rank algorithms, including randomized SVD, is the determination of the numerical rank (based on the spectral norm). 
For instance, a variety of low-rank factorization techniques require the target rank of the factorization as input information. 
Once a low-rank approximation $\hat A_r$ of the specified target rank has been computed, a posteriori estimation of the error $\|A-\hat A_r\|_2$ via a small number of matrix-vector multiplications~\cite{MartinssonTroppacta} is a reliable means of checking if the input rank $r$ was sufficient. If $r$ was too low, one would need to sample the matrix with more vectors. This clearly requires more computational work, and can be a major difficulty in the streaming model, wherein revisiting the matrix is impossible~\cite{tropp2019streaming}. Conversely, if the input rank $r$ was too high, the computational cost of computing $\hat A_r$ would be higher than it could be. 
Having a fast and reliable rank estimator is therefore highly desirable.

In this work we propose and analyze a fast randomized algorithm for estimating the numerical rank of an approximately low-rank matrix $A\in\mathbb{R}^{m\times n}$ or $A\in\mathbb{C}^{m\times n}$.  The algorithm is based on sketching both the column as well as the row space of $A$, forming $Y AX$, where $Y$ and $X$ are random (subspace embedding) matrices. 
The key idea is that with high probability, the singular values of $AX$ are good estimates of the (leading) singular values $\sigma_i(A)$ of $A$. Therefore the decay of $\sigma_i(A)$ can be reliably estimated by the decay of $\sigma_i(AX)$. 
To estimate $\sigma_i(AX)$ we once again sketch $AX$ to obtain the much smaller matrix $Y AX$; it is only $\sigma_i(YAX)$ (or their estimates) that we actually need to compute.  It is noteworthy that in the algorithm it is only necessary to view to matrix $A$ once. Furthermore, since we obtain estimates $\sigma_i(YAX)$ for the leading singular values $\sigma_i(A)$, we will be able to detect a gap in the spectrum of $A$ and hence have the possibility of setting a  tolerance for the numerical rank in a data-driven manner.  

To our knowledge, the fact that a sketch of the form $YAX$ preserves `some coarse spectral structure' was first noted in~\cite{Andoni2013}. We use techniques similar to their proofs for our theoretical results.  Related results for the bulk of the eigenvalues of $(AX)^*(AX)$ were first obtained in~\cite{Gittens2011}.  The main theoretical contributions of this work, deterministic and probabilistic bounds on the accuracy of $\sigma_i(AX)$ as estimates for $\sigma_i(A)$, build on these results.  In particular, we provide a tighter lower bound on $\sigma_i(AX)/\sigma_i(A)$ relevant for small singular values of $AX$.

The complexity of our algorithm is $O(mn\log n+r^3)$ for dense $m\times n$ matrices, and can be lower if $A$ has structure that can be exploited for computing the sketches $AX$ and $Y AX$. 
This is clearly a subcubic complexity (assuming $r\ll m,n$), and it runs significantly faster than computing the full SVD. \ Moreover, in many cases, computing $AX$ (which is usually the dominant part of our rank estimation algorithm) is a required part of the main algorithm (e.g. randomized SVD); 
and in some algorithms~\cite{clarkson2017low,nakatsukasa2020fast} this is true even of $Y AX$. 
In such cases, the additional work needed for estimating the rank is therefore significantly smaller, such as $O(r^3)$ or sometimes even $O(r)$.  Our algorithm is competitive when the target rank $r$ is much smaller than the dimensions of the matrix.

In particular, for the widely used randomized SVD we argue that it is computationally efficient to compute $YAX$ and its singular values, after having computed a sketch $AX$. The reason is that $\sigma_i(YAX)$ will provide information on how appropriate the size of the sketch $AX$ is. Subsequently, more columns can be added or columns can be removed before computing a QR decomposition $QR = AX$ and computing the product $B = Q^*A$.  This will save computational cost in either case: an unnecessary large sketch, or a sketch of insufficiently large dimension. We also introduce an algorithm that combines the rank estimation algorithm with randomized SVD, using the information provided by $\sigma_i(YAX)$.

{\it Notation.} Unless specified otherwise,  $A$ is an $m\times n$ matrix where $m\geq n$.  The $i$th largest singular value of $A$ is denoted by $\sigma_i(A)$, and we furthermore use $\sigma_{\max}(A)$ and $\sigma_{\min}(A)$ for the largest and smallest singular values respectively. We use $\|\cdot\|$ to denote the spectral norm $\|A\|=\sigma_1(A)$, and $\|A\|_F$ is the Frobenius norm.
$\F$ denotes the field, in our case $\F=\R$ or $\F=\C$. The  numerical rank estimate will be denoted by $\hat{r}$. 

Throughout the paper we use $X$ and $Y$ for \emph{random oblivious subspace embedding matrices}, such that with high probability, 
for any fixed $Q$ with orthonormal columns we have $\sigma_{\max}(Q^*X),\sigma_{\max}(YQ) \leq 1 +\tilde{\epsilon}$ and $\sigma_{\min}(Q^*X),\sigma_{\min}(YQ) \geq 1 - \tilde{\epsilon}$ for some $\tilde{\epsilon}\in(0,1)$.  We use $r$, $r_1$ and $r_2$ 
 to refer to the size of embedding matrices, which must be at least the number of columns in $Q$. 
The matrix $X$ is required to have more columns than $Q$, and the same goes for the rows of $Y$. For brevity we simply call such $X$ and $Y$ embedding matrices.

The analysis will be specified to Gaussian or subsampled randomized trigonometric transform (SRTT)~\cite{MartinssonTroppacta} embedding matrices at times. We will use $G\in\R^{n\times r}$,  where $n>r$, to denote a matrix where each entry is iid $\mathcal{N}(0,1)$ distributed and call this matrix a real (standard) Gaussian matrix. In case of $\mathbb{F} = \mathbb{C}$, $G\in\mathbb{C}^{n\times r}$ denotes a complex Gaussian matrix with iid $\mathcal{CN}(0,1)$ entries.  That is, $\operatorname{Re}(G_{ij}),\operatorname{Im}(G_{ij}) \sim \mathcal{N}(0,1/2)$ and independent. A Gaussian matrix can be scaled to an embedding matrix by defining $X = G/\sqrt{r}$ (in both the real and complex case); we call the scaled Gaussian matrices \emph{Gaussian embedding matrices}.  SRTT matrices are defined in Section \ref{sec:choicesketch} and as the scaling is included in the definition,  they are naturally embedding matrices. We will use $\Theta$ to denote an SRTT matrix.

\subsection{Numerical rank and goal of a rank estimator}\label{sec:whatisgoal}
So far we have been using the term ``the numerical rank'' informally. Let us now define the notion. 
This is a standard definition, see for example~\cite{beckermann2017singular}. 
\begin{definition}
Let $A\in\mathbb{F}^{m\times n}$. The $\epsilon$-rank of $A$, denoted by $\rank_\epsilon(A)$, is the integer\footnote{If no such $i$ exists, we take $\rank_\epsilon(A)=\min(m,n)$; however, in this paper we are never interested in this full-rank case. 
We also note that MATLAB's {\tt rank} function takes the \emph{absolute} $\epsilon$-rank, i.e., $\sigma_i(A)>\epsilon\geq \sigma_{i+1}(A)$  for a user-specified tolerance $\epsilon$}.
  $i$ such that $\sigma_i(A)>\epsilon\|A\|_2 \geq \sigma_{i+1}(A)$.
\end{definition}
We adopt the \textit{relative} $\epsilon$-rank definition as it is a natural choice in the context of low-rank matrix approximation.  One could easily adapt the algorithm presented to estimate the \textit{absolute} $\epsilon$-rank.
Let us discuss what the goal of a rank estimator should be. One natural answer of course is that the estimator should output $\rank_\epsilon(A)$ given $A$ and the (user-defined) relative threshold $\epsilon$. However, we argue that the situation is more benign: consider, for example, a matrix with $\|A\| = 1$ and $k$ singular values $>10\epsilon$, five singular values 
in $(\epsilon,1.1\epsilon)$, five more in 
$(0.9\epsilon,\epsilon)$, and the remaining $n-k-10$ are $<0.01\epsilon$. What should the estimator output? Is it crucial that the ``correct'' value $\rank_\epsilon(A)=k+5$ is identified? 

In this paper we take the view that the goal of the rank estimator is to find a good $\epsilon$-rank, not necessary \emph{the} $\epsilon$-rank. In the example above, any number between $k$ and $k+10$ would be an acceptable output. 
In most applications that we are aware of (including low-rank approximation, regularized linear systems, etc), there is little to no harm in choosing a rank $\hat r\neq \rank_\epsilon$:
a slight overestimate $\hat r> \rank_\epsilon$ usually results in slightly more computational work, whereas 
a slight underestimate $\hat r< \rank_\epsilon$ is fine if $\sigma_{\hat r+1}(A)=O(\epsilon \|A\|)$, that is, a rank-$\hat r$ matrix can approximate $A$ to relative $O(\epsilon)$-accuracy. 

On the other hand, it is clearly \emph{not} acceptable if the rank is unduly underestimated in that $\sigma_{\hat r+1}(A)\gg \epsilon\|A\|$. 
It is also unacceptable if $\sigma_{\hat r}(A)\ll \epsilon\|A\|$. 
The goal in this paper is to devise an algorithm that efficiently and reliably finds an $\hat r$ such that 
\begin{itemize}
\item $\sigma_{\hat r+1}(A)=O(\epsilon\|A\|)$ (say, $\sigma_{\hat r+1}(A)<10\epsilon \|A\|$): $\hat r$ is not a severe underestimate, and 
\item $\sigma_{\hat r}(A)=\Omega(\epsilon\|A\|)$ (say, $\sigma_{\hat r}(A)>0.1\epsilon \|A\|$): $\hat r$ is not a severe overestimate.
\end{itemize}

Any such $\hat r$ is sensible in that there exists a rank-$\hat r$ approximation of $A$ with $O(\epsilon)$ relative accuracy, and $\hat r$ is not much larger than the best possible for the accuracy required.
In addition, the nature of randomized algorithms means that the user must be willing to accept results that do not hold precisely and that small inaccuracies in the results are acceptable. As a result, we assume one is not looking for the exact numerical rank, but an order-of-magnitude estimate.
Our rank estimate will satisfy these conditions with high probability. 
In particular, in situations where the numerical rank is clearly defined, i.e., 
if a clear gap is present in the spectrum $\sigma_r(A)\gg \epsilon\|A\| \gg \sigma_{r+1}(A)$, the algorithm will reliably find the exact rank $\hat r = \rank_\epsilon(A)$. 

Additionally, in situations where it is unknown whether the matrix $A$ is low-rank approximable, our rank estimator can tell us (roughly) how well $A$ can be approximated with a low-rank matrix.  Similarly,  if it is unknown what a suitable tolerance would be and/or the user would like to detect a gap in the spectrum,  the algorithm can be used to plot an estimated spectrum and detect gaps.

This paper is organized as follows. Section~\ref{sec:related} discusses related studies in the literature. In Section~\ref{sec:theory} we show that $\sigma_i(AX)$ gives useful information about $\sigma_i(A)$ for leading values of $i$. Then in Section~\ref{sec:approxorth} we show that $\sigma_i(AX)$ can be estimated via $\sigma_i(YAX)$. Section~\ref{sec:mainalg} summarizes the overall rank estimation algorithm. Numerical experiments are presented in Section~\ref{sec:exp}. 

\section{Related work}\label{sec:related}
\subsection{Existing methods for numerical rank estimation}
At the core of numerical rank estimation lies singular value estimation; specifically,  the rank can be found by counting the number of singular values greater than the tolerance $\epsilon$. The most direct way to do this is to explicitly compute the singular values of a matrix.  However, as is well known, this requires $O(mn^2)$ operations for an $m\times n$ matrix~\cite{golubbook4th}, and for large matrices this is computationally inadmissible.  Another point of view, specifically for Hermitian matrices, is that a numerical rank estimate can follow from estimating the density of states (DOS).  The DOS can be interpreted as a probability density function describing the position of the eigenvalues, and can be estimated with algorithms analyzed in~\cite{specdensity2016}. 

Regarding fast algorithms that can run with subcubic complexity,  the literature on rank estimation appears rather scant.  Exceptions include the work of Ubaru and Saad~\cite{ubaru2016fast}, Ubaru, Saad and Seghouane~\cite{Ubaru2017},  Zhang, Wainwright and Jordan~\cite{zhang2015distributed} and Di Napoli, Polizzi and Saad \cite{di2016efficient}.  These works are all concerned with counting the number of eigenvalues in a certain interval for a Hermitian matrix. Alternatively, \cite{cheung2013fast} discusses an algorithm for computing the exact rank of a matrix.

The first two references 
\cite{ubaru2016fast,Ubaru2017}
employ the idea of density of states (DOS). In~\cite{ubaru2016fast}, the authors use the DOS to locate a gap in the spectrum, derive an appropriate tolerance $\epsilon$ and subsequently use polynomial approximation and stochastic trace estimation to count the number of eigenvalues greater than the tolerance. Computationally, the algorithm only requires matrix-vector products with $A$ but consequently also requires many views of $A$.  In the second paper~\cite{Ubaru2017}, the authors also first estimate the DOS to locate a gap and then estimate the integral of the DOS.  We compare against these algorithms in the numerical experiments, although they need to be adjusted slightly to account for matrices that are not symmetric positive semi-definite.  An advantage of both methods is that the entire spectrum is estimated,  however they require many more views of the matrix than most randomized methods.  The results are discussed in Section \ref{sec:expcompRE}. The works~\cite{zhang2015distributed} and~\cite{di2016efficient} are similar in spirit. The first focuses on minimizing communication cost involved with rank estimation in a distributed setting, the second approximates an eigenvalue count using polynomial and rational approximation filtering.

As for more theoretical contributions, much of this work builds upon Andoni and Nguy\~{\^{e}}n~\cite{Andoni2013}, which is to the best of the authors' knowledge the first work that shows rank can be estimated via sketches.  The work is focused on estimating eigenvalues of Hermitian matrices via the sketch $X^TAX$, but also considers the singular values of non-symmetric matrices using $YAX$. 
While Andoni and Nugyen work with the Gram matrix of the sketch, obtaining results in terms of the difference of the squared singular values (or via the Jordan-Wieldant matrix, which is larger), we work with $YAX$ directly. 
Moreover, their work is of a theoretical nature, and do not present a concrete algorithm for estimating the rank. They also state results that only come with relatively low probability guarantees, although it is possible to take a larger sample to improve the probability. 

Here we introduce precise algorithms (also for low-rank approximation), show numerical experiments, and phrase our results in terms of popular embedding matrices allowing for results that hold with exponentially small failure probability.

\subsection{Related work in statistics}
Rank estimation, or singular value estimation, is a well-known problem in the context of covariance matrix estimation or PCA in statistics, or the detection of signals in signal processing.  Most rank estimation algorithms in this context require computing an SVD or eigendecomposition; exceptions include~\cite{ubaru2019find}. Since we consider a context where computing these decompositions exactly is not computationally feasible, we do not take this into account. 

The statistics literature is connected to this work in yet another way: relating the singular values of the sketch $AX$ to the singular values of $A$ is equivalent to the problem of covariance matrix estimation.  This can be seen as follows: let $A = U\Sigma V^*$ be the SVD of $A$ and note the singular values of $AX$ are the same as the singular values of $\Sigma X_1$, where $X_1 = V^*X$ is a Gaussian embedding matrix. The columns of $\Sigma X_1$ can be viewed as $r$ scaled observations from an $n$-dimensional $\mathcal{N}(0, \Sigma^2)$ distribution. As the number of observations tends to infinity, the singular values of the 'data matrix' $\Sigma X_1$ tend to the singular values of $\Sigma$.  Estimation of (approximately) low-rank covariance matrix is studied in, among others, \cite{vershynin2018high, wainwright2019high, koltchinskii2017concentration} and has a clear relation to PCA.

One specific related example in the statistics literature that is well studied is the spiked covariance matrix model, introduced by Johnstone~\cite{johnstone2001distribution} and analyzed by, among others, 
Bai and Silverstein~\cite{bai2010spectral},
Nadler \cite{Nadler2008finite} and Rao et al. \cite{Rao2008StatisticalMatrices}.  We find that the theory in these works cannot be applied in our context, since the tail of the singular values in this model is very heavy.  In the general context of covariance matrix estimation, various authors have suggested different manipulations to the sample covariance matrix or its eigenvalues to improve the estimator, known as shrinkage. See, for instance, the work by Ledoit and Wolf on linear shrinkage \cite{Ledoit2004AMatrices}, nonlinear computational shrinkage \cite{Ledoit2012NonlinearMatrices} and nonlinear analytical shrinkage \cite{Ledoit2020AnalyticalMatrices}, or by El Karoui based on random matrix theory \cite{ElKaroui2008SpectrumTheory}. 
Our numerical experiments suggest these methods are either inefficient for large-scale matrices or unsuitable for numerically low-rank matrices, and we do not discuss them further.

\subsection{Related algorithms in (randomized) numerical linear algebra}
A number of recent papers focus on randomized algorithms to perform numerical linear algebra tasks on large matrices.  Specifically, randomized algorithms for either full or low-rank matrix factorization are of interest in the discussion of numerical rank estimation. This relevance is two-fold, as a numerical rank estimate can be a natural consequence of a factorization. In other cases, an a priori rank estimate can aid in performing a (low-rank) factorization. 

The former is the case for rank-revealing full factorizations, such as rank-revealing QR decompositions \cite{Duersch2020randomized} or UTV factorizations \cite{Martinsson2019RandUTV}.  Full factorizations are (relatively speaking) most useful when the $\epsilon$-rank is not much smaller than the matrix dimensions, and still require cubic $O(mn^2)$ complexity.  As we mainly consider the case where $\rank_\epsilon(A)\ll \min(m,n)$, we focus on methods that have subcubic complexity.

A number of randomized algorithms for a low-rank factorization have been proposed  and they can often be modified to yield a (rank $r$) QB factorization: $A \approx QB$, where $Q\in\F^{m\times r}$ is a matrix with orthonormal columns and $B\in\F^{r\times n}$ is a short and fat matrix. 
Singular value estimates can then be obtained by computing the exact singular values of $B$, which in turn leads to a numerical rank estimate. Within randomized algorithms for QB factorizations, we can distinguish between (adaptive) algorithms that focus on solving the fixed-precision problem and non-adaptive algorithms that focus on the fixed-rank problem.

The fixed-precision problem can be summarised as: find $QB$ such that $\|A-QB\|\leq \epsilon\|A\|$. Adaptive algorithms for this problem include the adaptive randomized range finder~\cite{halko2011finding}, the incremental rangefinder \cite{MartinssonTroppacta},  RandQB\_b~\cite{martinsson2016randomized} and RandQB\_EI~\cite{Yu2018efficient}. The fixed-precision problem is discussed in more detail in Section \ref{sec:fixedprecision}.  

Conversely, the fixed-rank problem is: find $QB$ of exact rank $r$ such that $\|A-QB\|$ is as small as possible.  One of the most well-known algorithms in randomized fixed-rank factorization is randomized rangefinder~\cite{halko2011finding}, which is a core part of the randomized SVD algorithm proposed in~\cite{halko2011finding}.  

We discuss these algorithms further, comparing with our algorithm, in Sections~\ref{sec:mainalg}--\ref{sec:exp}, to  demonstrate that our algorithm is much more efficient for rank estimation, and can often be used as a convenient preprocessing step for a low-rank approximation algorithm to determine the input rank. 

Conversely, a numerical rank estimate can aid the factorization process. This is discussed in detail in Section \ref{sec:fixedprecision} and forms the basis for a non-adaptive low-rank factorization algorithm we propose for the fixed-precision problem.

\section{Sketching roughly preserves singular values: \boldmath$\sigma_i(AX)/\sigma_i(A)=O(1)$ for leading $i$}\label{sec:theory}
In this section we show that if a matrix $A$ has low numerical rank, then sketching preserves its leading singular values sufficiently well to provide an estimate for its numerical rank. Again drawing the connection to sample covariance matrix estimation, it is intuitive that it should be difficult, if not impossible, to obtain information about an $n$-dimensional distribution using only $r$ samples, when $r\ll n$. Clearly, there are $n-r$ singular values (or signals) we cannot detect, but it is also not clear why the $r$ singular values we estimate would be any good. One way to think about this is to see that $\mathcal{N}(0, \Sigma^2)$ lies close to a low-dimensional subspace, since $A$ is approximately low-rank. This makes it more intuitive why a small sample size could suffice to retrieve a reasonably accurate approximation of $\Sigma$ (see also~\cite[Sec.  9.2.3]{vershynin2018high}). 

We make the connection between the low-rank structure of $A$ and our ability to estimate its singular values using $AX$ explicit with the deterministic and probabilistic error bounds on the ratio $\sigma_i(AX)/\sigma_i(A)$.  The analysis will focus on Gaussian matrices. As explained before, $G\in\R^{n\times r}$ denotes a Gaussian matrix with iid $\mathcal{N}(0,1)$ ($\F = \R$) or $\mathcal{CN}(0,1)$ ($\F=\C$) entries, and a Gaussian embedding matrix will be of the form $X = G/\sqrt{r}$.  We start from the unscaled sketch $AG$.  Let $A=U\Sigma V^*$ be the SVD of $A$ and decompose this further to $A = U_1\Sigma_1V_1^* + U_2\Sigma_2V_2^*$, where $U_1$ contains the leading $r$ singular vectors of $A$.
Now decompose $AG\in\F^{m\times r}$ as 
\begin{align}\label{eq:AGV1V2}
    AG &= U_1\Sigma_1(V_1^*G) + U_2\Sigma_2(V_2^*G) = U_1\Sigma_1G_1 + U_2\Sigma_2G_2,
\end{align}
where $\Sigma_1$ is $r\times r$ and $G_1\in\mathbb{F}^{r \times r}$ and $G_2\in\mathbb{F}^{(n-r) \times r}$ are independent Gaussian matrices. 
Define $B_1 = U_1\Sigma_1G_1 \in \F^{m\times r}$ and $B_2 = U_2\Sigma_2G_2\in\F^{m\times r}$. We start with the following result, which examines the relation between $\sigma_i(AG)$ and $\sigma_i(A)$. 
\begin{lemma}\label{lemma:deterrAX} 
Let $A\in\F^{m\times n}$ and $G\in\F^{n\times r}$. Decompose $AG$ as in~\eqref{eq:AGV1V2}. Then, for $i = 1,\dots, r$,
\begin{align}
    \sigma_{\min}(\hat{G}_{\{i\}}) &\leq \frac{\sigma_i(AG)}{\sigma_i(A)} \leq \sqrt{\sigma_{\max}(\tilde{G}_{\{r-i +1\}})^2 + \left(\frac{\sigma_{r+1}(A)\sigma_{\max}(G_2)}{\sigma_i(A)}\right)^2}, \label{eq:4.detbound}
\end{align}
where $\hat{G}_{\{i\}}$ is an $i\times r$ matrix consisting of the first $i$ rows of $G_1$, and $\tilde{G}_{\{r-i +1\}}$ is the matrix of the last $r-i+1$ rows of $G_1$. 
Furthermore, if $G$ is a Gaussian matrix, then
for each $i\in\{1,\ldots,r\}$, the random matrices $\hat{G}_{\{i\}}$, $\tilde{G}_{\{r-i+1\}}$, and $G_2$ are independent Gaussian in $\F$.
\end{lemma}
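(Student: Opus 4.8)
The plan is to derive both inequalities in \eqref{eq:4.detbound} from the Courant--Fischer min-max principle for the singular values of $AG$, with test subspaces taken in the domain $\F^r$, using only the orthogonality built into the decomposition $A = U_1\Sigma_1 V_1^* + U_2\Sigma_2 V_2^*$. The one structural fact needed is that, since $U_1$ and $U_2$ have orthonormal columns spanning mutually orthogonal subspaces,
\[
  \|AGx\|^2 = \|\Sigma_1 G_1 x\|^2 + \|\Sigma_2 G_2 x\|^2 \qquad \text{for every } x\in\F^r,
\]
together with the trivial bound $\|\Sigma_2 G_2 x\| \le \sigma_{r+1}(A)\,\sigma_{\max}(G_2)\,\|x\|$ (because $\Sigma_2$ has all its singular values at most $\sigma_{r+1}(A)$). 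Writing $\|\Sigma_1 G_1 x\|^2 = \sum_{j=1}^r \sigma_j(A)^2\,|(G_1 x)_j|^2$, where $(G_1x)_j$ is the $j$-th entry of $G_1x$, the leading term will be handled by choosing the test subspace so as to retain only a favourable block of rows of $G_1$, on which every surviving $\sigma_j(A)$ can be replaced by $\sigma_i(A)$ from one side.

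For the lower bound I would use $\sigma_i(AG) = \max_{\dim S = i}\min_{x\in S,\,\|x\|=1}\|AGx\|$ and pick $S$ to be the row space of $\hat G_{\{i\}}$; this is $i$-dimensional unless $\hat G_{\{i\}}$ is rank-deficient, in which case $\sigma_{\min}(\hat G_{\{i\}}) = 0$ and there is nothing to prove. For a unit $x\in S$, discarding the summands $j>i$ and using $\sigma_j(A)\ge\sigma_i(A)$ for $j\le i$ gives $\|AGx\|^2 \ge \|\Sigma_1 G_1 x\|^2 \ge \sigma_i(A)^2\|\hat G_{\{i\}}x\|^2 \ge \sigma_i(A)^2\,\sigma_{\min}(\hat G_{\{i\}})^2$, the last step because $x$ lies in the row space of $\hat G_{\{i\}}$. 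Dividing by $\sigma_i(A)$ gives the left inequality.

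For the upper bound I would dually use $\sigma_i(AG) = \min_{\dim T = r-i+1}\max_{x\in T,\,\|x\|=1}\|AGx\|$ and take $T = \ker(\hat G_{\{i-1\}})$, the common null space of the first $i-1$ rows of $G_1$ (all of $\F^r$ when $i=1$). Since $\hat G_{\{i-1\}}$ has only $i-1$ rows, $\dim T \ge r-i+1$, so $T$ contains an $(r-i+1)$-dimensional subspace and the minimum is bounded above by $\max_{x\in T,\,\|x\|=1}\|AGx\|$. On $T$ the first $i-1$ entries of $G_1x$ vanish, so $\|\Sigma_1 G_1 x\|^2 = \sum_{j=i}^r \sigma_j(A)^2\,|(G_1x)_j|^2 \le \sigma_i(A)^2\,\|\tilde G_{\{r-i+1\}}x\|^2$; adding $\|\Sigma_2 G_2 x\|^2 \le \sigma_{r+1}(A)^2\sigma_{\max}(G_2)^2\|x\|^2$, maximising over unit $x\in T$, dividing by $\sigma_i(A)^2$ and taking square roots gives the right inequality. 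The distributional claim then follows at once from rotational invariance of the Gaussian: $V^*G$ is again a standard Gaussian matrix, so its disjoint blocks $G_1 = V_1^*G$ and $G_2 = V_2^*G$ are independent standard Gaussians, every row-submatrix of $G_1$ is standard Gaussian, and disjoint row-submatrices of $G_1$ (such as $\hat G_{\{i-1\}}$ and $\tilde G_{\{r-i+1\}}$) are mutually independent and independent of $G_2$.

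I do not expect a substantive obstacle; the difficulty is purely bookkeeping. The points to watch are: (i) aligning indices so that exactly $\hat G_{\{i\}}$ appears in the lower bound while the first $i-1$ rows---not the first $i$---are the ones annihilated in the upper bound, leaving $\tilde G_{\{r-i+1\}}$ as rows $i$ through $r$; (ii) the boundary cases $i=1$ and $i=r$ and a rank-deficient $\hat G_{\{i\}}$; (iii) the standard fact that a subspace of dimension at least $r-i+1$ may legitimately be used inside a minimum over $(r-i+1)$-dimensional subspaces; and (iv) that dividing by $\sigma_i(A)$ presumes $\sigma_i(A)>0$, which is the only case of interest.
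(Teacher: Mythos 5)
Your proof is correct and rests on the same two ingredients as the paper's: the Courant--Fischer characterization of $\sigma_i(AG)$ together with the orthogonal splitting $\|AGx\|^2=\|\Sigma_1G_1x\|^2+\|\Sigma_2G_2x\|^2$ and the bound $\|\Sigma_2G_2x\|\leq\sigma_{r+1}(A)\,\sigma_{\max}(G_2)\,\|x\|$ on the tail. The only difference is organizational: the paper factors through the intermediate quantity $\sigma_i(B_1)$ in two separate Courant--Fischer steps (testing on the trailing right singular vectors of $\Sigma_1G_1$, and then on coordinate subspaces in the domain of $\Sigma_1$ after transposing to $G_1^*\Sigma_1$), whereas you reach the same inequality in one pass by testing on the row space of $\hat{G}_{\{i\}}$ for the lower bound and on the kernel of the first $i-1$ rows of $G_1$ for the upper bound.
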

\begin{proof}
The proof consists of establishing the identities
\begin{align}
    1 &\leq \frac{\sigma_i(AG)}{\sigma_i(B_1)} \leq \sqrt{1 + \left(\frac{\sigma_{r+1}(A)\sigma_{\max}(G_2)}{\sigma_i(B_1)}\right)^2}, \label{eq:4.detbound1} \\
    \sigma_{\min}(\hat{G}_{\{i\}}) &\leq \frac{\sigma_i(B_1)}{\sigma_i(A)} \leq \sigma_{\max}(\tilde{G}_{\{r-i +1\}}),  \label{eq:4.detbound2} 
\end{align}
for $i= 1,\dots, r$. 

It is immediate from the definitions of $B_1$ and $B_2$ that $(AG)^*(AG) = B_1^*B_1 + B_2^*B_2$. As a result, we have by Weyl's theorem, for $i = 1,\dots,r$, 
\begin{equation}
	\sigma_i(B_1)^2 + \sigma_{\min}(B_2)^2 \leq \sigma_i(AG)^2 \leq \sigma_i(B_1)^2 + \|B_2\|^2.
\end{equation}
The upper bound follows from observing $\|B_2\|\leq\|\Sigma_2\|\|G_2\| = \sigma_{r+1}(A)\sigma_{\max}(G_2)$. The lower bound is immediate.

We use the interlacing property of singular values to show~\eqref{eq:4.detbound2}. For a square matrix $B\in\F^{k\times k}$, let $B_{[-p]}\in\F^{k\times (k-p)}$ denote a submatrix of $B$ obtained by deleting any $p$ columns. Then~\cite{mathias1990two}
\begin{equation}\sigma_j(B)\leq\sigma(B_{[-p]})\leq \sigma_{j+p}(B), \quad \text{ for } j = 1,2,\dots, k-p.\label{eq:interlacingsvs}
	\end{equation}
First, notice the upper bound~\eqref{eq:4.detbound2} is trivial for $i = 1$. For $i = 2,\dots,r$, partition $G_1^*\Sigma_1$ as 
$$G_1^*\Sigma_1 = \left[\hat{G}^*_{\{i-1\}}\Sigma_{11}\,\,\,\tilde{G}^*_{\{r-i+1\}}\Sigma_{12} \right],$$
where $\Sigma_{11}\in\R^{(i-1)\times(i-1)}$ and $\Sigma_{12}\in\R^{(r-i+1)\times (r-i+1)}$ are diagonal matrices which contain the singular values of $\Sigma_1$. We then have
\begin{align*}
	\sigma_i(B_1) &= \sigma_i(
                        \begin{bmatrix}
\hat{G}^*_{\{i-1\}}\Sigma_{11}&\tilde{G}^*_{\{r-i+1\}}\Sigma_{12}                          
                        \end{bmatrix}) \\
	&\leq \sigma_1(\tilde{G}^*_{\{r-i+1\}}\Sigma_{12}) \\
	& \leq \|\Sigma_{12}\|_2\|\tilde{G}_{\{r-i+1\}}\|_2 = \sigma_i(A)\|\tilde{G}_{\{r-i+1\}}\|_2,
\end{align*}
where we applied~\eqref{eq:interlacingsvs} with $j=1$ and $p=i-1$ for the first inequality. Similarly, for the lower bound and $i = 1,\dots,r-1$, consider the partition
$$G_1^*\Sigma_1 = \left[\hat{G}^*_{\{i\}}\Sigma_{11}\,\,\,\tilde{G}^*_{\{r-i\}}\Sigma_{12} \right],$$
where now $\Sigma_{11}$ is $i\times i$ and $\Sigma_{12}$ is $(r-i)\times(r-i)$. We have
\begin{align*}
	\sigma_i(B_1) &= \sigma_i(\left[\begin{smallmatrix}\hat{G}^*_{\{i\}}\Sigma_{11}&\tilde{G}^*_{\{r-i\}}\Sigma_{12} \end{smallmatrix}\right]) \\
	&\geq \sigma_i(\hat{G}^*_{\{i\}}\Sigma_{11}) \\
	& \geq \sigma_{\{i\}}(\Sigma_{11})\sigma_{\min}(\hat{G}_{\{i\}}) = \sigma_i(A)\sigma_{\min}(\hat{G}_{\{i\}}).
\end{align*}
The lower bound is immediate for $i = r$. 

If $G$ is  Gaussian, its orthogonal invariance implies $\big[
\begin{smallmatrix}
  V_1^*\\V_2^*
\end{smallmatrix}
\big]G$ is also Gaussian. 
Since $\hat{G}_{\{i\}}$, $\tilde{G}_{\{r-i+1\}}$, and $G_2$ are submatrices of this matrix with no overlap for a fixed $i$, it immediately follows that they are Gaussian and independent. 
\end{proof}

It is worth noting that~\eqref{eq:4.detbound} holds without the assumption that $G$ is Gaussian.  

Lemma~\ref{lemma:deterrAX} motivates us to look at the singular values of independent Gaussian matrices $\hat{G}_{\{i\}}$, $\tilde{G}_{\{r-i +1\}}$ and $G_2$. To this end we use two classic results from random matrix theory to derive probabilistic bounds on the expected error and on the failure probability. 
\begin{theorem}[Marčenko and Pastur~\cite{Marcenko1967DistributionMatrices}, Davidson and Szarek~\cite{davidson2001local},  Aubrun and Szarek~\cite{aubrun2017alice}]\label{thm:MP} Let $G\in\R^{m\times n}$ be a standard Gaussian matrix with $m\geq n$. Then 
\begin{equation}\label{eq:MP}
    \sqrt{m}-\sqrt{n}\leq \mathbb{E}\,\sigma_{\min}(G)\leq\mathbb{E}\,\sigma_{\max}(G)\leq \sqrt{m} + \sqrt{n}.
\end{equation}
Furthermore, for every $t\geq 0$ one has
\begin{align}\label{eq:DS}
    \mathbb{P}\{\sigma_{\min}(G) \leq \sqrt{m} - \sqrt{n} - t \} \leq e^{-t^2/2}, \quad \mathbb{P}\{\sigma_{\max}(G) \geq \sqrt{m} + \sqrt{n} + t \} \leq e^{-t^2/2}.
\end{align}
Let $G\in\C^{m\times n}$ be a complex Gaussian matrix with $m\geq n$. Then, for every $t\geq 0$ one has
\begin{equation}\label{eq:AScomplexUB}
\mathbb{P}\{\sigma_{\max}(G) \geq \sqrt{m} + \sqrt{n} +t\}\leq \frac{1}{2}e^{-t^2}.
\end{equation}
Furthermore, for every $t> 4\sqrt{2\log n}/(\sqrt{m/n}-1)$,
\begin{equation}\label{eq:AScomplexLB}
\mathbb{P}\{\sigma_{\min}(G) \leq \sqrt{m} - \sqrt{n} -t\}\leq e^{-t^2/4}.
\end{equation}
\end{theorem}
A useful way to interpret these results in simple terms is that rectangular random (Gaussian) matrices with aspect ratio $m/n>1$ are well-conditioned, with singular values supported in $[\sqrt{m}-\sqrt{n},\sqrt{m}+\sqrt{n}\kern1pt ]$. 
We note that~\eqref{eq:MP}, which follows from the Marčenko and Pastur rule~\cite{Marcenko1967DistributionMatrices}, holds more generally for random matrices with iid entries with mean 0 and variance 1.  The second pair of bounds \eqref{eq:DS} do not generally hold for other random matrices, see \cite{Rudelson2010} for details.  As far as the authors are aware, there does not exist a lower bound on the expectation of the smallest singular value of a complex normal matrix. An upper bound for the largest singular value can be found in~\cite[Prop. 6.33]{aubrun2017alice}.
We use these results to bound the singular values of $\hat{G}_{\{i\}}$, $\tilde{G}_{\{r-i+1\}}$,  and $G_2$, which leads to the following theorem. We present the result only in the real case, but it can easily be extend to the complex case by combining Lemma~\ref{lemma:deterrAX} with~\eqref{eq:AScomplexUB} and~\eqref{eq:AScomplexLB}.
\begin{theorem}\label{thm:mainerrAX}
    Let $X\in\R^{n\times r}$ be a Gaussian embedding matrix, i.e., $X = G/\sqrt{r}$, and let $A\in\R^{m\times n}$, where $m\geq n$. 
Then, for $i=1,\dots, r$
    \begin{equation}\label{eq:ineqGauss}
        1- \sqrt{\frac{i}{r}}  \leq \text{\large{$\mathbb{E}$}}\frac{\sigma_i(AX)}{\sigma_i(A)}\leq 1 + \sqrt{\frac{r-i+1}{r}} + \frac{\sigma_{r+1}(A)}{\sigma_i(A)}\left(1 + \sqrt{\frac{n-r}{r}}\right).
    \end{equation}
    Additionally, for each $i=1,\dots,r$ we have, with failure probability at most $3e^{-t^2/2}$,
    \begin{multline}\label{eq:highconcentrate}
    1 - \sqrt{\frac{i}{r}} - \frac{t}{\sqrt{r}} \leq \frac{\sigma_i(AX)}{\sigma_i(A)}\\ 
    \leq 1+ \sqrt{\frac{r-i+1}{r}}  + \frac{\sigma_{r+1}(A)}{\sigma_i(A)}\left(1+ \sqrt{\frac{n-r}{r}} \,\right) + \frac{t}{\sqrt{r}}\left(1 + \frac{\sigma_{r+1}(A)}{\sigma_i(A)}\right).
    \end{multline}
\end{theorem}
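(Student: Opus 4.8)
The plan is to combine the deterministic bound \eqref{eq:4.detbound} of Lemma~\ref{lemma:deterrAX} with the random matrix estimates of Theorem~\ref{thm:MP}, keeping track of the scaling $X = G/\sqrt{r}$, which gives $\sigma_i(AX) = \sigma_i(AG)/\sqrt{r}$. Dividing \eqref{eq:4.detbound} through by $\sqrt{r}$, the lower bound reads $\sigma_i(AX)/\sigma_i(A) \ge \sigma_{\min}(\hat{G}_{\{i\}})/\sqrt{r}$, and for the upper bound I would first apply the elementary inequality $\sqrt{a^2+b^2}\le a+b$ (valid for $a,b\ge 0$) to split the right-hand side into an additive form,
\[
\frac{\sigma_i(AX)}{\sigma_i(A)} \;\le\; \frac{\sigma_{\max}(\tilde{G}_{\{r-i+1\}})}{\sqrt{r}} \;+\; \frac{\sigma_{r+1}}{\sigma_i}\,\frac{\sigma_{\max}(G_2)}{\sqrt{r}}.
\]
Everything then reduces to controlling $\sigma_{\min}$ of the $i\times r$ Gaussian block $\hat{G}_{\{i\}}$ and $\sigma_{\max}$ of the $(r-i+1)\times r$ block $\tilde{G}_{\{r-i+1\}}$ and the $(n-r)\times r$ block $G_2$, all of which are standard Gaussian by Lemma~\ref{lemma:deterrAX}.

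To invoke Theorem~\ref{thm:MP}, which is stated for matrices with at least as many rows as columns, I would use that a matrix and its transpose share the same nonzero singular values; hence \eqref{eq:MP} and \eqref{eq:DS} apply with $\min$ and $\max$ of the two dimensions playing the roles of $n$ and $m$, irrespective of aspect ratio. This is the only point where orientation matters, since $G_2$ is tall or wide according to whether $n-r\ge r$ or $n-r<r$, but either way one gets bounds in terms of $\sqrt{r}+\sqrt{n-r}$. For the expectation bound \eqref{eq:ineqGauss} I then take expectations of the two displayed inequalities and apply \eqref{eq:MP}: $\mathbb{E}\,\sigma_{\min}(\hat{G}_{\{i\}}) \ge \sqrt{r}-\sqrt{i}$ for the lower bound, and $\mathbb{E}\,\sigma_{\max}(\tilde{G}_{\{r-i+1\}}) \le \sqrt{r}+\sqrt{r-i+1}$, $\mathbb{E}\,\sigma_{\max}(G_2) \le \sqrt{r}+\sqrt{n-r}$ together with linearity of expectation for the upper bound; dividing by $\sqrt{r}$ reproduces \eqref{eq:ineqGauss} exactly.

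For the high-probability statement \eqref{eq:highconcentrate} I would fix $t\ge 0$ and consider the three events on which $\sigma_{\min}(\hat{G}_{\{i\}}) \ge \sqrt{r}-\sqrt{i}-t$, $\sigma_{\max}(\tilde{G}_{\{r-i+1\}}) \le \sqrt{r}+\sqrt{r-i+1}+t$, and $\sigma_{\max}(G_2) \le \sqrt{r}+\sqrt{n-r}+t$. By \eqref{eq:DS} each of these fails with probability at most $e^{-t^2/2}$, so by a union bound all three hold simultaneously except with probability at most $3e^{-t^2/2}$. On that event, substituting into the lower bound and into the split upper bound above, dividing by $\sqrt{r}$, and collecting the $t/\sqrt{r}$ contributions (which combine into the factor $1+\sigma_{r+1}/\sigma_i$ in the upper bound) yields precisely the two-sided estimate \eqref{eq:highconcentrate}.

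I do not expect a genuine obstacle: the theorem is essentially a careful bookkeeping assembly of Lemma~\ref{lemma:deterrAX} and Theorem~\ref{thm:MP}. The only things demanding attention are correctly orienting each Gaussian block before using \eqref{eq:MP}--\eqref{eq:DS}, using $\sqrt{a^2+b^2}\le a+b$ rather than trying to carry the square root through (this is what produces the clean additive bounds and the $(1+\sigma_{r+1}/\sigma_i)$ coefficient on $t/\sqrt{r}$), and observing that the union bound requires no independence among the three blocks, so the independence asserted in Lemma~\ref{lemma:deterrAX} is not actually needed here. Degenerate cases such as $i=r$, where $\hat{G}_{\{r\}}=G_1$ is square and the lower bound collapses to $0$, are handled automatically by the same argument.
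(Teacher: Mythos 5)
Your proposal is correct and follows essentially the same route as the paper's proof: divide the deterministic bounds of Lemma~\ref{lemma:deterrAX} by $\sqrt{r}$, relax the square-root upper bound via $\sqrt{a^2+b^2}\le a+b$, apply \eqref{eq:MP} with linearity of expectation for \eqref{eq:ineqGauss}, and use \eqref{eq:DS} with a union bound over the three Gaussian blocks for \eqref{eq:highconcentrate}. Your remarks on transposing the wide blocks before invoking Theorem~\ref{thm:MP} and on the union bound not requiring independence are correct refinements of details the paper leaves implicit.
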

\begin{proof}
Note first from Marčenko-Pastur~\eqref{eq:MP}
that we have 
\begin{align*}
\text{\large{$\mathbb{E}$}}\,\sigma_{\min}(\hat{G}_{\{i\}}) &\geq \sqrt{r} - \sqrt{i}; \\
\text{\large{$\mathbb{E}$}}\,\sigma_{\max}(\tilde{G}_{\{r-i+1\}}) &\leq \sqrt{r} + \sqrt{r-i+1}; \\
\text{\large{$\mathbb{E}$}}\,\sigma_{\max}(G_2) &\leq \sqrt{n-r} + \sqrt{r}.
\end{align*}
We can combine this with the result of Lemma \ref{lemma:deterrAX} to find
\begin{align*}
\text{\large{$\mathbb{E}$}}\frac{\sigma_i(AG)}{\sigma_i(A)} &\leq \text{\large{$\mathbb{E}$}}\left[\sigma_{\max}(\tilde{G}_{\{r-i +1\}})^2 + \left(\frac{\sigma_{r+1}(A)\sigma_{\max}(G_2)}{\sigma_i(A)}\right)^2 \right]^{1/2} \\
&\leq \text{\large{$\mathbb{E}$}}\left[\sigma_{\max}(\tilde{G}_{\{r-i +1\}}) + \frac{\sigma_{r+1}(A)\sigma_{\max}(G_2)}{\sigma_i(A)}\right] \\
& = \text{\large{$\mathbb{E}$}}\,\sigma_{\max}(\tilde{G}_{\{r-i +1\}}) +  \frac{\sigma_{r+1}(A)}{\sigma_i(A)}\,\text{\large{$\mathbb{E}$}}\,\sigma_{\max}(G_2) \\
& \leq \sqrt{r} + \sqrt{r - i + 1} + \frac{\sigma_{r+1}(A)}{\sigma_i(A)}\left(\sqrt{n-r} + \sqrt{r}\right),
\end{align*}
and
\begin{align*}
\text{\large{$\mathbb{E}$}}\,\frac{\sigma_i(AG)}{\sigma_i(A)} \geq \text{\large{$\mathbb{E}$}}\,\sigma_{\min}(\hat{G}_{\{i\}}) \geq \sqrt{r} - \sqrt{i}.
\end{align*}
The first result of Theorem \ref{thm:mainerrAX} then follows by applying the scaling $1/\sqrt{r}$.  Similarly,  by \eqref{eq:DS} we have with failure probability at most $3e^{-t^2/2}$ that the following statements hold simultaneously:
\begin{align*}
\frac{\sigma_{\min}(\hat{G}_{\{i\}})}{\sqrt{r}} &\geq 1 - \sqrt{\frac{i}{r}} - \frac{t}{\sqrt{r}}, \\
\frac{\sigma_{\max}(\tilde{G}_{\{r-i+1\}})}{\sqrt{r}} &\leq 1+ \sqrt{\frac{r-i+1}{r}} + \frac{t}{\sqrt{r}}, \\
\frac{\sigma_{r+1}}{\sigma_i}\frac{\sigma_{\max}(G_2)}{\sqrt{r}} &\leq \frac{\sigma_{r+1}(A)}{\sigma_i(A)}\left(1 + \sqrt{\frac{n-r}{r}}\right) + \frac{\sigma_{r+1}(A)}{\sigma_i(A)}\frac{t}{\sqrt{r}}.
\end{align*}
Since the deterministic results in Lemma \ref{lemma:deterrAX} imply
\begin{align*}
\frac{\sigma_{\min}(\hat{G}_{\{i\}})}{\sqrt{r}} \leq \frac{\sigma_i(AG)}{\sigma_i(A)}&\leq \frac{\sigma_{\max}(\tilde{G}_{\{r-i+1\}})}{\sqrt{r}} + \frac{\sigma_{r+1}(A)}{\sigma_i(A)}\frac{\sigma_{\max}(G_2)}{\sqrt{r}},
\end{align*}
the result follows.
\end{proof}
\subsection{Bounds for general subspace embeddings}
The results above focus on Gaussian embedding matrices, and show that the ratios of the singular values $\sigma_i(AX)/\sigma_i(A)$ are reasonably close to $1$, as specified by~\eqref{eq:ineqGauss}. We now show that much of this carries over to a general subspace embedding $X$.  Note that an $r$-dimensional subspace embedding will not generally embed an $r$-dimensional space, but rather a smaller, say $\tilde{r}$-dimensional, space. 
\begin{theorem}\label{thm:embed}
Let $\tilde V_1\in\mathbb{F}^{n\times \tilde r}$ be the leading $\tilde r (\leq r)$ right singular vectors of $A\in\mathbb{F}^{m\times n}$, and 
suppose $X\in\F^{n\times r}$ is a subspace embedding for the subspace $\tilde V_1$ satisfying $\sigma_i(\tilde V_1^TX)\in[1-\tilde{\epsilon},1+\tilde{\epsilon}]$ for some $\tilde{\epsilon}<1$. 
Then, for $i=1,\dots, \tilde r$,
    \begin{equation}\label{eq:ineqembed}
1-\tilde{\epsilon} \leq \frac{\sigma_i(AX)}{\sigma_i(A)}\leq 
\sqrt{(1+\tilde{\epsilon})^2 + \left(\frac{\sigma_{\tilde r+1}(A)\|X\|_2}{\sigma_i(A)}\right)^2}.
    \end{equation}
\end{theorem}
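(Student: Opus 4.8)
The plan is to re-run the deterministic argument of Lemma~\ref{lemma:deterrAX} essentially verbatim, with the index $r$ replaced by $\tilde r$ and the Gaussian block $G_1$ replaced by the embedded block $W_1:=V_1^TX$. First I would write $A=U_1\Sigma_1V_1^T+U_2\Sigma_2V_2^T$ where $U_1,V_1$ collect the leading $\tilde r$ singular vectors, so that $\Sigma_1$ is $\tilde r\times\tilde r$ and $\|\Sigma_2\|_2=\sigma_{\tilde r+1}(A)$. Setting $W_1:=V_1^TX\in\F^{\tilde r\times r}$, $W_2:=V_2^TX$, $B_1:=U_1\Sigma_1W_1$ and $B_2:=U_2\Sigma_2W_2$, we get $AX=B_1+B_2$, and since $U_1^TU_2=0$ the columns of $B_1x$ and $B_2x$ are orthogonal for every $x$, so $\|(B_1+B_2)x\|^2=\|B_1x\|^2+\|B_2x\|^2$.

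Next I would repeat the Courant--Fischer chain used for \eqref{eq:4.detbound1}. Choosing $\mathcal{\tilde S}$ to be the span of the trailing $r-i+1$ right singular vectors of $\Sigma_1W_1$ (well defined because $X$ has $r\geq\tilde r$ columns, and on that subspace the operator norm equals $\sigma_i(\Sigma_1W_1)=\sigma_i(B_1)$ for $i\leq\tilde r$) gives $\sigma_i(AX)\leq\sqrt{\sigma_i(B_1)^2+\|B_2\|_2^2}$, and restricting to the $B_1$-part of the max--min characterization gives $\sigma_i(AX)\geq\sigma_i(B_1)$, for $i=1,\dots,\tilde r$. The tail is controlled by $\|B_2\|_2\leq\|\Sigma_2\|_2\|W_2\|_2=\sigma_{\tilde r+1}(A)\|V_2^TX\|_2\leq\sigma_{\tilde r+1}(A)\|X\|_2$.

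It then remains to sandwich $\sigma_i(B_1)=\sigma_i(\Sigma_1W_1)$ between $(1-\epsilon)\sigma_i(A)$ and $(1+\epsilon)\sigma_i(A)$. The upper side is immediate: $\sigma_i(\Sigma_1W_1)\leq\sigma_i(\Sigma_1)\sigma_{\max}(W_1)=\sigma_i(A)\,\sigma_{\max}(V_1^TX)\leq(1+\epsilon)\sigma_i(A)$. For the lower side I would use that the embedding hypothesis forces $W_1=V_1^TX$ to have full row rank $\tilde r$ with $\sigma_{\min}(W_1)\geq1-\epsilon>0$, hence $W_1W_1^T\succeq\sigma_{\min}(W_1)^2 I_{\tilde r}$ and therefore $\Sigma_1W_1W_1^T\Sigma_1\succeq\sigma_{\min}(W_1)^2\Sigma_1^2$; Weyl's monotonicity then yields $\sigma_i(\Sigma_1W_1)\geq\sigma_{\min}(W_1)\sigma_i(\Sigma_1)\geq(1-\epsilon)\sigma_i(A)$. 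Substituting both bounds into the two displays above and dividing by $\sigma_i(A)$ gives exactly \eqref{eq:ineqembed}.

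The only step that is more than bookkeeping is this last lower bound $\sigma_i(\Sigma_1W_1)\geq\sigma_{\min}(W_1)\sigma_i(\Sigma_1)$: for a generic product this need not hold since $W_1$ is $\tilde r\times r$ with $\tilde r\leq r$, and the point is precisely that the subspace-embedding assumption makes $\sigma_{\min}(W_1)>0$ so that $W_1W_1^T$ is positive definite and the Weyl argument applies. I would also flag that the whole argument only produces the estimate for $i\leq\tilde r$, since for larger $i$ the matrix $\Sigma_1$ has no $i$-th singular value and the trailing right-singular-vector subspace in the Courant--Fischer step collapses.
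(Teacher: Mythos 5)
Your proposal is correct and follows essentially the same route as the paper: the paper invokes the deterministic Courant--Fischer sandwich of Lemma~\ref{lemma:deterrAX} with $G$ replaced by $X$ and $r$ by $\tilde r$, then relaxes the submatrix bounds $\sigma_{\min}(\hat{X}_{\{i\}})\geq\sigma_{\min}(\tilde V_1^TX)$ and $\sigma_{\max}(\tilde{X}_{\{\tilde r-i+1\}})\leq\sigma_{\max}(\tilde V_1^TX)$ to the embedding hypothesis, which is exactly what your direct sandwich $\sigma_{\min}(W_1)\sigma_i(\Sigma_1)\leq\sigma_i(\Sigma_1W_1)\leq\sigma_{\max}(W_1)\sigma_i(\Sigma_1)$ accomplishes. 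Your Weyl-monotonicity justification of the lower bound and the tail estimate $\|B_2\|_2\leq\sigma_{\tilde r+1}(A)\|X\|_2$ are both sound.
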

\begin{proof}
We start with a variant of~\eqref{eq:4.detbound}, 
which follows from the same arguments:
\[
    \sigma_{\min}(\hat{X}_{\{i\}}) \leq \frac{\sigma_i(AX)}{\sigma_i(A)} \leq \sqrt{\sigma_{\max}(\tilde{X}_{\{\tilde r-i +1\}})^2 + \left(\frac{\sigma_{\tilde r+1}(A)\sigma_{\max}(X_2)}{\sigma_i(A)}\right)^2}, \quad i=1,\ldots,\tilde r.
\]
Here $\hat{X}_{\{i\}}, \tilde{X}_{\{\tilde r-i +1\}}$  are submatrices of $\tilde V_1^TX$ (as in Lemma~\ref{lemma:deterrAX}), and $X_2 = \tilde V_2^TX$, where $\tilde V_2\in\mathbb{F}^{n\times (n-\tilde r)}$ is the trailing right singular vectors of $A$. 
By the assumption $\sigma_i(\tilde V_1^TX)\in[1-\tilde{\epsilon},1+\tilde{\epsilon}]$, we have 
\[\sigma_{\min}(\hat{X}_{\{i\}})\geq \sigma_{\min}(\tilde V_1^TX)\geq 1-\tilde{\epsilon},\]
which gives the lower bound  in~\eqref{eq:ineqembed}, and also 
\[\sigma_{\max}(\tilde{X}_{\{\tilde r-i +1\}})\leq \sigma_{\max}(\tilde V_1^TX)\leq 1+\tilde{\epsilon}.\]
Finally, $\sigma_{\max}(X_2)=\|\tilde V_2^TX\|_2\leq \|X\|_2$ to obtain the upper bound. 
\end{proof}
We note that the $\|X\|_2$ term in~\eqref{eq:ineqembed} is bounded by $\sqrt{n/r}$, often deterministically, in most commonly-used embeddings (see, e.g.~\eqref{eq:Theta}). 
This is roughly equal to $\sqrt{\frac{n-r}{r}}$ appearing in~\eqref{thm:mainerrAX}. 
Thus Theorems~\ref{thm:mainerrAX} and~\ref{thm:embed} offer roughly the same level of guarantee in terms of the quality of $\sigma_i(AX)$ as an estimate for $\sigma_i(A)$. 
\subsection{The effect of tail singular values and oversampling}
The proof of Lemma \ref{lemma:deterrAX} allows us to see how the tail of the singular values, $\Sigma_2$, affects the lower bound on the ratio $\sigma_i(AG)/\sigma_i(A)$.  This allows us to make a case for oversampling based on singular value estimation, whereas usually oversampling is inspired through a discussion on singular vectors. Again using the Courant-Fischer theorem, we have
\begin{align*}
\sigma_i(AG)&=  \max_{\substack{\mathcal{S}\subset \F^r\\ \dim \mathcal{S} = i}\,\,} \min_{\substack{x\in\mathcal{S} \\ \|x\| = 1}} \left\|
      \begin{bmatrix}
     \Sigma_1G_1\\
     \Sigma_2G_2
     \end{bmatrix}x \right\|.
\end{align*}
Now let $\hat{\mathcal{S}}\subset\F^r$ and $\hat{x}\in\hat{\mathcal{S}}$ be such that 
\begin{align*}
\max_{\substack{\mathcal{S}\subset \F^r\\ \dim \mathcal{S} = i}\,\,} \min_{\substack{x\in\mathcal{S} \\ \|x\| = 1}} \left\|
     \Sigma_1G_1 x \right\| = \min_{\substack{x\in\hat{\mathcal{S}} \\ \|x\| = 1}} \left\|
     \Sigma_1G_1 x \right\| = \left\|
     \Sigma_1G_1 \hat{x} \right\| =\left\|
     B_1 \hat{x} \right\|= \sigma_i(B_1).
\end{align*}
Then $\sigma_i(AG)\geq \| \begin{bmatrix}\Sigma_1G_1 & \Sigma_2G_2 \end{bmatrix}^T \hat{x}\| = \sqrt{\|B_1\hat{x}\|^2 + \|\Sigma_2G_2\hat{x}\|^2}$. We can use this to show
\begin{equation}\label{eq:effecttail}
     \frac{\sigma_i(AX)}{\sigma_i(A)} = \frac{\sigma_i(AG)}{\sigma_i(B_1)}\frac{\sigma_i(B_1)}{\sigma_i(A)\sqrt{r}} \geq \frac{\sigma_{\min}(\hat{G}_{\{i\}}) }{\sqrt{r}}\sqrt{1 + \frac{\|\Sigma_2G_2\hat{x}\|^2}{\sigma_i(B_1)^2}}.   
\end{equation}
As $G_2\hat{x}$ is a Gaussian vector, we know $\mathbb{E}\|\Sigma_2G_2\hat{x}\| = \|\Sigma_2\|_F$.  Additionally, it is worthwhile to note $\sigma_{\min}(\hat{G}_{\{i\}})/\sqrt{r} \approx 1 - \sqrt{i/r}$ in expectation. The lower bound \eqref{eq:effecttail} thus suggests that a heavier tail $\Sigma_2$, in terms of $\|\Sigma_2\|_F$,  would result in larger values for $\sigma_i(AX)$ than a light tail. Consider in particular the last few singular values of $AX$, where the term $\sigma_{\min}(\hat{G}_{\{i\}})$ is small.  If the tail is light too, i.e.\ $\|\Sigma_2G_2\hat{x}\|$ is small, the lower bound will not be strong. We see in practice that this effect is indeed noticeable for the smallest singular values of $AX$, making them less reliable estimators. The bulk of the singular values of $AX$ are, however, not affected by the size of the tail given the low-rank structure of the matrix. We display this effect in Figures \ref{fig:effecttails1} and \ref{fig:effecttails2} with small-scale experiments.

\begin{figure}
\centering
\includegraphics[width = 1.0\linewidth]{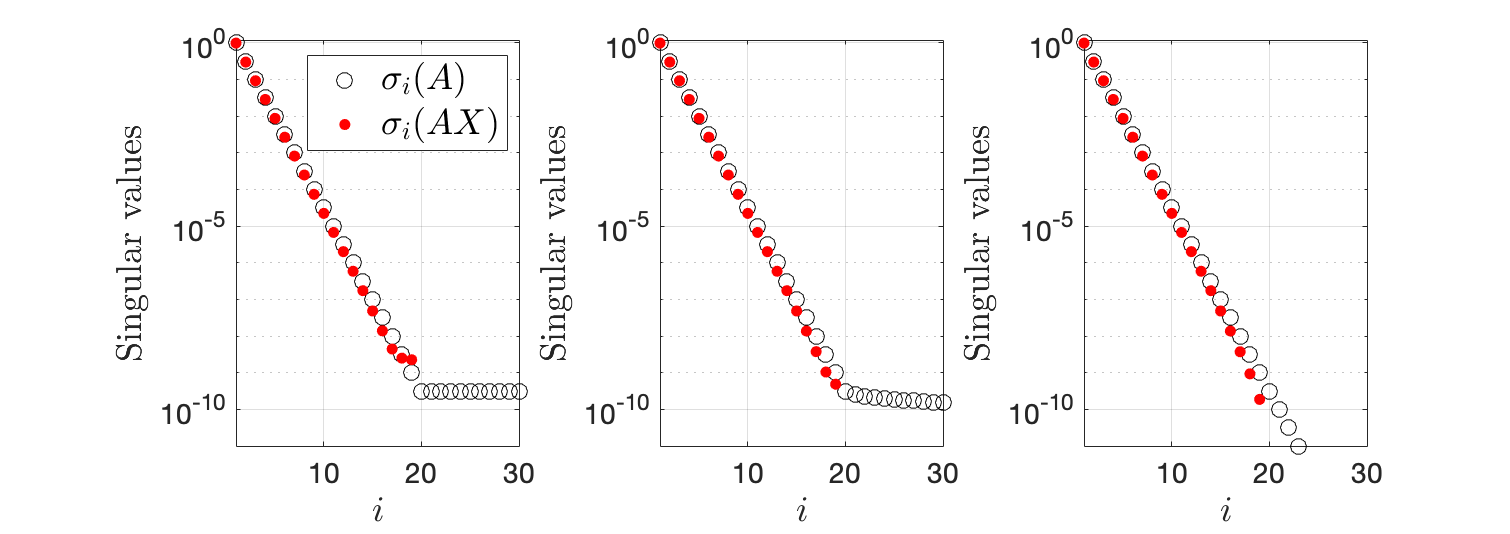}
\caption{How the tail of the singular value spectrum influences the singular value estimates. Each of the graphs shows the first 30 singular values $\sigma_i(A)$ of a square matrix of dimension $n=1000$. The first 20 singular values are identical for each of the matrices. The matrices in the graphs have constant, slow polynomially decreasing exponentially decreasing tails (from left to right). We sketch with a Gaussian embedding matrix of size $r= 19$. Although $\sigma_{r+1}=\sigma_{20}$ is the same for each of the matrices, we see the tail affects (only) the last few estimates. The results shown here are the average of 1000 trials, yet the behaviour is very typical.}
\label{fig:effecttails1}
\end{figure}
\begin{figure}
\centering
\includegraphics[width = 1.0\linewidth]{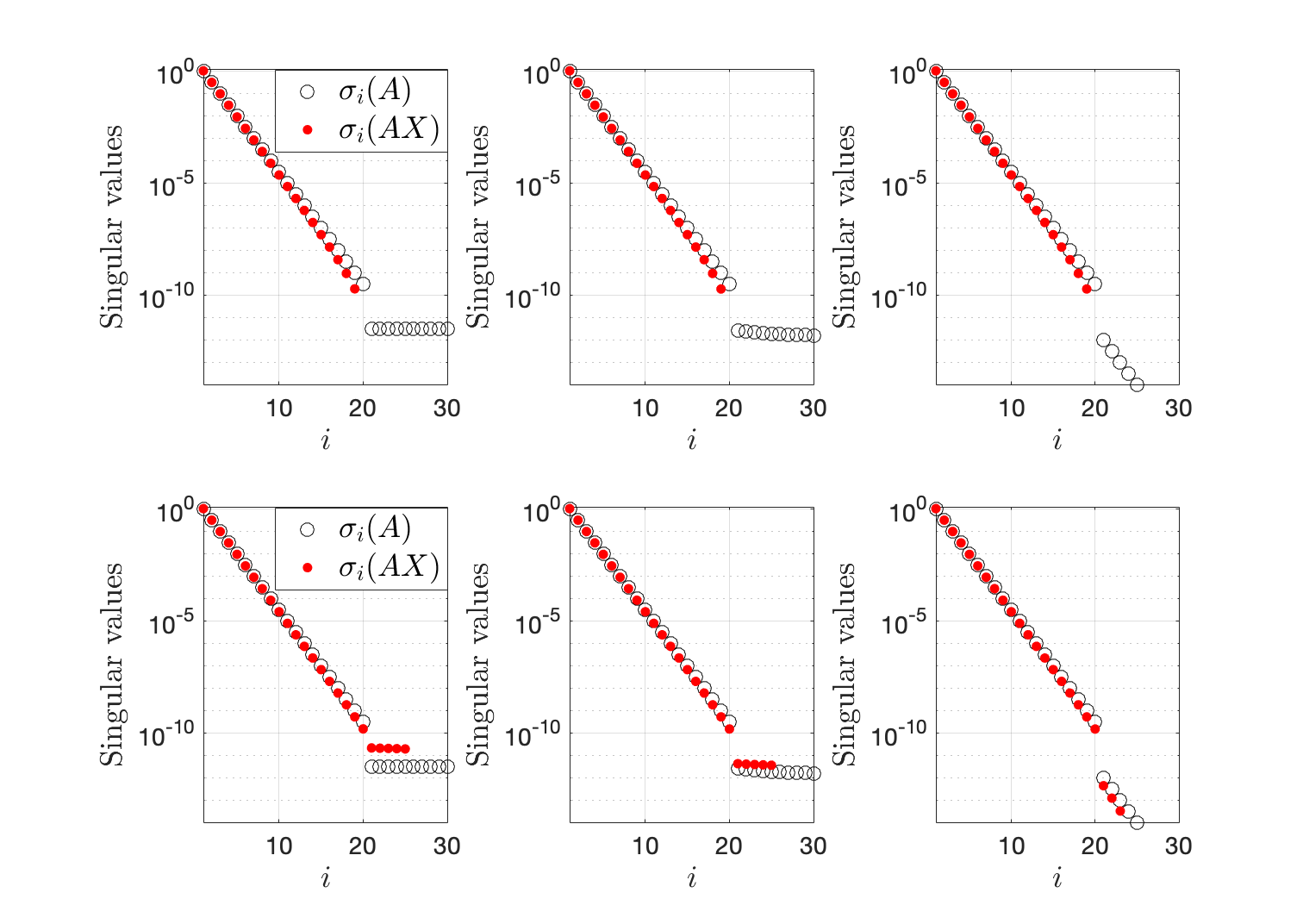}
\caption{One could argue that in none of the cases discussed in Figure \ref{fig:effecttails1} is the numerical rank very well-defined, because there is no clear gap between any of the singular values. We repeat the experiment of Figure \ref{fig:effecttails1} but with slightly different singular values. The top row shows the estimates of the first 19 singular values when there is a gap between the 20th and 21st singular value of $A$. We see that the effects of different tails becomes negligible. In the bottom row we repeat the experiment with $r=25$ instead of $r=19$. This illustrates that we can detect the gap in each of the cases. }
\label{fig:effecttails2}
\end{figure}

The small-scale experiments agree with bigger numerical results, as well as with theoretical results, in the sense that the extreme (smallest) singular value estimates are not trustworthy. For this reason we recommend oversampling by 10\% and disregarding the singular values corresponding to the oversampling. 


\subsection{Choice of sketch matrix}\label{sec:choicesketch}
In our analysis we have mainly focused on Gaussian embedding matrices, as they are the most well-studied class of random matrices and allow us to derive sharp constants.  A Gaussian embedding matrix is a random matrix with iid entries $\mathcal{N}(0,1/r)$. They are often the simplest sketch to implement. While it requires $O(mnr)$ operations to compute $AX$, 
when $r=O(1)$ the complexity is optimal as $mn$ is a lower bound for dense $A$, as clearly all entries of $A$ need to be read.  In fact, Gaussian sketches can be among the fastest to execute when $r=O(1)$.  Furthermore, Gaussian is the most efficient type of embedding in the sense that an embedding for an $r$-dimensional subspace can be achieved with high probability using $\mathcal{O}(r)$ samples.

Otherwise, if $r\gg 1$, other, more structured, classes of sketch matrices have been suggested and employed to speed up the computation of $AX$. These include subsampled randomized trigonometric transforms (SRTTs)~\cite{MartinssonTroppacta}, hashed randomized trigonometric transforms (HRTTs)~\cite{zhenshao}, and sparse matrix embeddings \cite{cohen2016nearly,clarkson2017low,kane2014sparser}.

In particular, SRTTs such as the subsampled randomized Hadamard, Fourier, discrete cosine or discrete Hartley transforms are random embeddings that allow fast application to a matrix.  The use of these sketches can be justified by Theorem~\ref{thm:embed}. While they come with weaker theoretical guarantees than Gaussian embeddings~\cite{Tropp2011ImprovedTransform} (in that the size of the sketch needs to be at least $\mathcal{O}(r\log r)$ to ensure an embedding with high probability),  empirical evidence strongly suggests that they usually perform just as well.  The performance of SRTT matrices is discussed more in the next section.

Related to SRTTs are the recently introduced class of random embeddings, HRTTs~\cite{zhenshao}. In these works it is shown an embedding for an $r$-dimensional subspace can be achieved with high probability using $O(r)$ samples (as with Gaussians) rather than $O(r\log r)$, by replacing the subsampling matrix $S$ in~\eqref{eq:Theta} with a hashing matrix (e.g.\ Countsketch~\cite{woodruff2014sketching}), which can be done without increasing the complexity. Computational results suggest HRTTs perform adequately in pathological cases where SRTTs fail, such as diagonal matrices  (see discussions on coherence in~\cite{Ipsen2014, candes2009exact}).

Finally, highly sparse embedding matrices have attracted much attention in the theoretical computer science literature~\cite{clarkson2017low,kane2014sparser}. The state-of-the-art result~\cite{cohen2016nearly} suggests an oversampling by a factor $\log r$ is needed to obtain an embedding with high probability.

When $A\in\mathbb{F}^{m\times n}$ has no structure to take advantage of (e.g. dense), such SRTT embeddings allow us to compute $AX$ with $O(mn\log n)$ operations, which is optimal up to an $O(\log n)$ factor. 
When $A$ has structure such as sparsity, $AX$ could be computed much more efficiently, for example in $O(\mbox{nnz}(A)r)$ operations for a Gaussian $X$ and less for sparse sketches. 

The choice of $X$ therefore depends on the structure of $A$; but the complexity of the sketching $AX$ can be bounded from above by $O(mn\log n)$ for dense matrices. We mainly use HRTT embedding matrices --- specifically hashed randomized DCTs --- in this paper as they are able to perform optimally, even for very coherent matrices. In our numerical experiments, we use diagonal matrices which is the most coherent type of matrix, and therefore most difficult to sketch.

\subsection{A numerical experiment}
We show how the theoretical result of this section, namely $\sigma_i(AX)/\sigma_i(A) = \mathcal{O}(1)$ for leading $i$, practically translates to a rank estimation technique. After sketching with an $n\times r$ embedding matrix $X$, we compute the singular values $\sigma_1(AX),\dots,\sigma_r(AX)$ and estimate $\text{rank}_{\epsilon}(A)$ for a given tolerance $\epsilon$ to be the first $\hat{r}$ such that $\sigma_{\hat{r}}(AX) > \epsilon\|A\| \geq \sigma_{\hat{r} + 1}(AX)$.  If (an estimate of) $\|A\|$ is unavailable, we may use $\sigma_1(AX)$. We applied this method to two synthetic example matrices, one diagonal matrix with polynomially decaying singular values ($\sigma_i(A) = i^{-3}$), and one diagonal matrix with exponentially decaying singular values ($\sigma_i(A) = 10^{-0.01(i-1)}$). The matrices are square of dimension $n = 10^5$.  

Figure \ref{fig:rankestAX} shows the results of the experiment. We plot the estimates for the numerical rank $\hat{r}$ that result from varying embedding dimensions $r$, and the ratio $\sigma_{\hat{r} + 1}(A)/\sigma_{\text{rank}_{\epsilon}(A) + 1}$. As both matrices do not have large gaps in the spectrum, the latter is an important quantity to judge the effectiveness of the rank estimator (see Section \ref{sec:whatisgoal}). We see that even for small values of $r$ and when using an HRTT or Gaussian embedding, $\sigma_{\hat{r} + 1}(A)$ is very close to $\sigma_{\text{rank}_{\epsilon}(A) + 1}$ and $\epsilon$ and well within the acceptable range described in Section 1.  Additionally, we see that SRTT matrices perform less than optimal for this pathological example of a diagonal matrix.

\begin{figure}
\hspace{-8mm}\includegraphics[width = 1.1\linewidth]{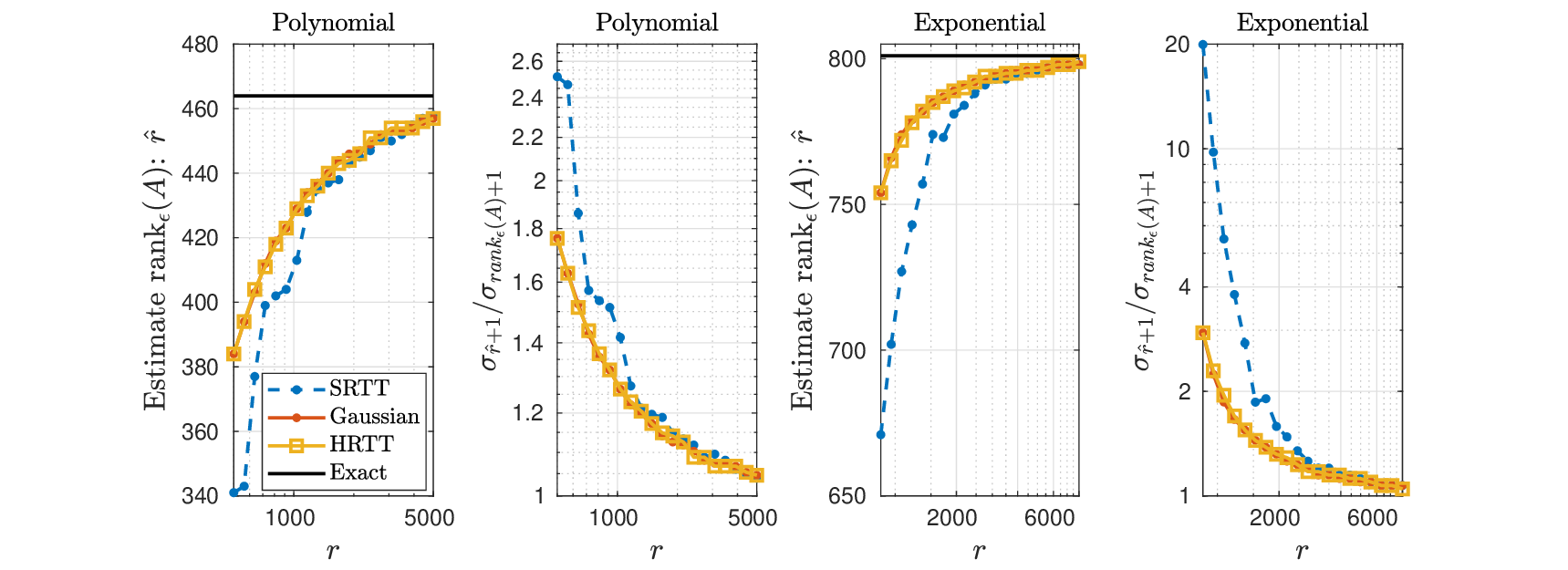}
\caption{Numerical rank estimation of matrices with polynomially decaying singular values and exponentially decaying singular values, where only the sketch $AX\in\R^{m\times r}$ is used. The matrices are real and square diagonal of dimension $n = 10^5$. The results are shown when using a subsampled randomized DCT embedding (SRTT), a Gaussian embedding matrix (Gaussian) and a hashed randomized DCT embedding (HRTT). The black lines indicate the exact numerical rank, $\text{rank}_{\epsilon}(A)$.}
\label{fig:rankestAX}
\end{figure}

\section{Randomized approximate orthogonalization: \boldmath$\sigma_i(Y AX)/\sigma_i(AX)=O(1)$} \label{sec:approxorth}
We have seen that the numerical rank of $AX$ is a good estimator for the numerical rank of $A$, provided that $A$ is approximately low-rank. Although $AX$ is of much smaller dimension than $A$, it may still be prohibitively expensive to compute its exact singular values. Furthermore, one could question whether it is worth calculating the singular values accurately spending $\mathcal{O}(mr^2)$ work, considering the accuracy that was lost in the sketching step. Here we describe a method, which we call randomized approximate orthogonalization, to cheaply compute estimates of the singular values of $AX$ in $\mathcal{O}(mr\log(m) + r^3)$ operations. The idea is similar to Section~\ref{sec:theory}, in that sketches roughly preserve (the leading) singular values, but as 
we are working with a tall-skinny matrix $AX$, 
here \emph{all} the singular values will be preserved up to a small multiplicative factor. 

Randomized approximate orthogonalization is inspired by the randomized least-squares solver framework named \emph{sketch-to-precondition}. The ideas were introduced in~\cite{rokhlin2008fast} and a fast implementation was introduced in Blendenpik~\cite{avron2010blendenpik}. In the framework,  a preconditioner for an overdetermined least-squares problem 
$\min_x\|Bx-b\|$ where $B\in\R^{m\times r_1}$ ($m\gg r_1$ is generated.  The solvers work as follows: the first step is to sample the rowspace of $B$ by sketching with a random embedding matrix $Y\in\R^{r_2\times m}$, where $r_2\geq r_1$, to obtain the small matrix $Y B$.  Secondly, one computes the QR factorization of $Y B = QR$. The upper triangular factor $R$ is then used as a preconditioner for an iterative solver such as LSQR. Even though the QR factorization is based on a sketch of $B$, it is now a well-known fact in RNLA that $BR^{-1}$ is `close to orthonormal’ in the sense that $\kappa(BR^{-1}) = \mathcal{O}(1)$.  Consequently, the singular values of $R$, or equivalently $Y B$,  are close to the singular values of $B$ (in the relative sense $\sigma_i(Y B)/\sigma_i(B)=O(1)$ for all $i$).  One could also view this process as a `randomized approximate orthogonalization' of $B$.

It follows that, in our context, we may estimate the singular values of $AX$ using the singular values of $YAX$, where $Y$ is another random embeddding (independent of $X$) of size $r_2\times m$.  It is then only necessary to compute the numerical rank (by computing the exact singular values) of the very small matrix $YAX\in\F^{r_2\times r_1}$ to estimate the numerical rank of the large matrix $A$. This is the final step of our rank estimation algorithm, which is discussed fully in the next section.

As for the choice of $Y$: unlike the first sketch of computing $AX$, in most cases the structure of $A$ (such as sparsity, if present) is lost in $B=AX$. It is therefore usually recommended that we take $Y$ to be an SRTT embedding, so that $YB = \Theta B$ can be computed in $O(mr\log m)$ operations. We assume this choice below and throughout the remainder of the paper, switching between the notations $YAX$ and $\Theta A X$ when appropriate.  That is,
\begin{equation}\label{eq:Theta}
	\Theta =\sqrt{ \frac{m}{r_2}}SFD,
\end{equation}
where $S\in\R^{r_2\times m}$ is a sampling matrix, $F$ a square orthogonal (or unitary) trigonometric transform (such as Fourier or DCT) of dimension $m$ and $D$ a diagonal matrix of independent random signs. 

Specifically the subsampled randomized Hadamard transform is analyzed extensively in~\cite{Tropp2011ImprovedTransform,    boutsidis2013improved}. These results can be readily extended to a general SRTT matrix and used in our context, as we show in the appendix in Lemma \ref{lem:BG}. This is essentially a restatement of~\cite[Thm.~3.1]{Tropp2011ImprovedTransform} and~\cite[Lem.~4.1]{boutsidis2013improved}. It leads to the following result on the accuracy of $\sigma_i(YAX)$ as estimates for $\sigma_i(AX)$.
\begin{corollary}\label{lem:YAX}
	Let $AX\in\R^{m\times r_1}$, with $m\geq r_1$, and let $\Theta\in\R^{r_2\times m}$ be an SRTT matrix as in~\eqref{eq:Theta}, where the trigonometric transform $F$ in $\Theta$ satisfies $\eta = m\max|F_{ij}|^2$. Let $0<\tilde{\epsilon}<1/3$ and $0<\delta<1.$ If
\begin{equation}
	6\eta\tilde{\epsilon}^{-2}\left[\sqrt{r_1} + \sqrt{8 \log(m/\delta)}\right]^2\log(r_1/\delta)\leq r_2\leq m,
\end{equation}
then, with failure probability at most $3\delta$
\begin{equation}
	\sqrt{1-\tilde{\epsilon}}\leq \frac{\sigma_i(\Theta AX)}{\sigma_i(AX)}\leq\sqrt{1+\tilde{\epsilon}},
\end{equation}
for each $i = 1,\dots, r_1$.
\end{corollary}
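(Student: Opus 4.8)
The plan is to reduce the statement to the behaviour of the SRTT matrix $\Theta$ applied to a matrix with orthonormal columns, for which Lemma~\ref{lem:BG} supplies the required two-sided spectral bound; the corollary is then essentially deterministic bookkeeping on top of that lemma. First I would take the thin QR factorization $AX = \hat{Q}\hat{R}$, where $\hat{Q}\in\R^{m\times r_1}$ has orthonormal columns and $\hat{R}\in\R^{r_1\times r_1}$ is upper triangular (we may assume $AX$ has full column rank $r_1$, as is generic and as \eqref{eq:svThetaB} already presupposes). Since $\sigma_i(AX)=\sigma_i(\hat{R})$ and $\Theta AX = (\Theta\hat{Q})\hat{R}$ with $\hat{R}$ square and invertible, the standard multiplicative singular-value inequality recorded in \eqref{eq:svThetaB} gives
\begin{equation*}
\sigma_{\min}(\Theta\hat{Q}) \;\le\; \frac{\sigma_i(\Theta AX)}{\sigma_i(AX)} \;\le\; \sigma_{\max}(\Theta\hat{Q}),
\end{equation*}
and crucially this holds for all $i=1,\dots,r_1$ once the extreme singular values of the single matrix $\Theta\hat{Q}$ are controlled.

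Next I would invoke Lemma~\ref{lem:BG} with the orthonormal matrix $\hat{Q}$ and the SRTT $\Theta$ of~\eqref{eq:Theta} whose transform $F$ has coherence parameter $\eta = m\max|F_{ij}|^2$. The hypothesis $6\eta\epsilon^{-2}[\sqrt{r_1}+\sqrt{8\log(m/\delta)}]^2\log(r_1/\delta)\le r_2\le m$ is precisely the sampling-size condition of that lemma, so it yields, with failure probability at most $3\delta$, the bound $\|I_{r_1} - (\Theta\hat{Q})^{T}(\Theta\hat{Q})\|_2 \le \epsilon$, equivalently $1-\epsilon \le \sigma_i(\Theta\hat{Q})^2 \le 1+\epsilon$ for every $i$, i.e.\ $\sqrt{1-\epsilon}\le\sigma_{\min}(\Theta\hat{Q})$ and $\sigma_{\max}(\Theta\hat{Q})\le\sqrt{1+\epsilon}$. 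Substituting these into the displayed inequality gives $\sqrt{1-\epsilon}\le \sigma_i(\Theta AX)/\sigma_i(AX)\le\sqrt{1+\epsilon}$ for each $i=1,\dots,r_1$, and the probability is not inflated by a union bound over $i$ because the spectral bound on $\Theta\hat{Q}$ is one event that controls all $r_1$ ratios simultaneously.

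The step requiring the most care is aligning the hypotheses and the probability with Lemma~\ref{lem:BG}: one must confirm that the $r_2$-threshold quoted here is exactly the one coming from the subsampled randomized Hadamard analysis of~\cite{Tropp2011ImprovedTransform} and its SRTT extension in~\cite{boutsidis2013improved} (with the coherence factor $\eta$ replacing the Hadamard value $1$), that the $3\delta$ failure probability is the union of the three events appearing in that lemma's proof, and that the restriction $\epsilon<1/3$ is what is needed both for the underlying matrix concentration inequality and to pass cleanly from the $(1\pm\epsilon)$ bound on the squared singular values to the $\sqrt{1\pm\epsilon}$ bound on the singular values themselves. All genuine probabilistic content is confined to Lemma~\ref{lem:BG}, proved in the appendix, so the remaining obstacle is purely one of careful referencing rather than new estimation.
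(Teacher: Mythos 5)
Your proposal is correct and follows exactly the paper's route: the paper obtains this corollary by combining the QR-based singular-value sandwich \eqref{eq:svThetaB} (with $B=AX=\hat Q\hat R$) with the two-sided spectral bound on $\Theta\hat Q$ from Lemma~\ref{lem:BG}, which is precisely your argument. The only additional content in your write-up, the observation that a single spectral event on $\Theta\hat Q$ controls all $r_1$ ratios simultaneously so no union bound over $i$ is needed, is implicit in the paper and correct.
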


The use of the number $\eta = m \max|F_{ij}|^2$ is a natural choice as it only depends on the choice of trigonometric transform, not on the size of the matrices involved.  Choices for the trigonometric transform include Fourier, cosine, Hadamard, and Hartley transforms. The optimal $\eta$ is attained for a Hadamard transform, for which $\eta = 1$. The same holds for a Fourier matrix, but this involves operations in $\mathbb{C}$ and when $A$ is real, we may prefer real transforms. Since the Hadamard transform is only available when the dimension $m$ is a power of 2 (for which one solution is to append zeros to the matrix), we make use of the discrete cosine transform with $\eta = 2$ in our experiments when $A$ is real. Alternatively, we could have chosen the discrete Hartley transform with the same coherence $\eta = 2$.  See~\cite{avron2010blendenpik} for more discussion. 

We also note that a result analogous to Corollary~\ref{lem:YAX} is given in~\cite[Thm.~4.4]{meng2014lsrn} when $\Theta$ is a Gaussian matrix (and hence in a closer spirit to Section~\ref{sec:theory}). Results for a general subspace embedding matrix can be found in~\cite{woodruff2014sketching}.
\subsection{A numerical experiment}
We explained how \textit{all} singular values of a tall and skinny matrix $AX\in\F^{m\times r_1}$ can be estimated by the singular values of $YAX$, where $Y\in\F^{r_2\times m}$ is an embedding matrix and $r_2\geq r_1$. In the next section we conclude how this, combined with the results of the previous section, leads to a rank estimation algorithm. In the following numerical experiment, we first illustrate the error one can expect in practice in the step from $AX$ to $YAX$. 

We let $B$ be a dense matrix with fast polynomially decaying singular values, $\sigma_i(B) = i^{-3}$, of size $10^5\times 2000$.  We estimate $\sigma_i(B)$ with $\sigma_i(YB)$, where $Y$ is either a subsampled randomized DCT embedding or a Gaussian embedding of dimension $4000$. The relative error $|\sigma_i(YB) - \sigma_i(B)|/\sigma_i(B)$ is plotted in Figure \ref{fig:relerrorYA}.  We see the relative errors are $o(1)$ for each $i=1,\dots,2000$.  We use a dense matrix with incoherent left singular vectors to resemble the matrix $AX$, which will never be very coherent due to the prior application of embedding $X$. As a result, the SRTT embedding is sufficient.

\begin{figure}
\centering
\includegraphics[width = 0.9\linewidth]{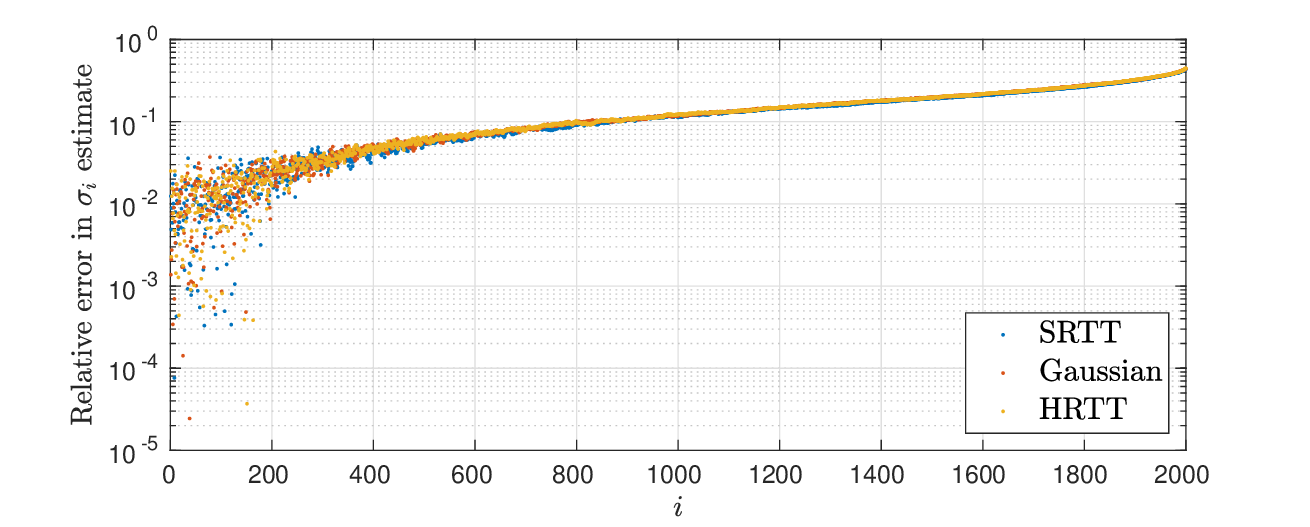}
\caption{Relative error in the singular value estimates of a real matrix $B$ with fast polynomially decaying singular values of size $10^5\times 2000$ based on the singular values of $Y B$, where $Y\in\R^{4000\times 10^5}$ is an embedding matrix (either a subsampled randomized DCT (SRTT), a Gaussian embedding matrix or a hashed randomided DCT (HRTT)). }
\label{fig:relerrorYA}
\end{figure}

\section{A numerical rank estimation scheme}\label{sec:mainalg}
The numerical rank estimation scheme that follows from the observations
\begin{enumerate}
\item $\sigma_i(AX) \approx \sigma_i(A)$ for an approximately low-rank matrix and leading singular values, and
\item $\sigma_i(YAX) \approx \sigma_i(AX)$ for all singular values of $AX$,
\end{enumerate}
is displayed in pseudocode in Algorithm \ref{alg:ouralg}. 

The algorithm requires an upper bound for the numerical rank as an input ($r_1$). This input informs the size of first embedding $X$, and is the total number of singular values estimates that will be obtained. If the input rank $r_1$ was not large enough, i.e.\ $r_1<\rank_\epsilon(A)$, Algorithm~\ref{alg:ouralg} is naturally only able to 
detect that $r_1$ is a lower bound for $\rank_\epsilon(A)$. 
To get a proper estimate, we then need to rerun the algorithm with a larger input rank, taking advantage of the preceding computation by appending extra columns to $X$ and rows to $\Theta$.

The choice of $r_1$ can be based on a number of possible considerations. 
For example, one might choose $r_1$ based on memory requirement, e.g. when one is unwilling to store more than $(m+n)r_1$ numbers. 
If some information is available on $A$ (e.g. it is updated from a matrix with known singular values), that can provide a good choice of $r_1$. If no information is available one might choose $r_1=O(1)$, say $r_1=64$; however, this can result in some inefficiency.\footnote{If the sketches employed are Gaussian, the complexity is $\mathcal{O}(mnr_1)$ (dominated by sketching) and the overhead of underestimating $r_1$ is minimal, bounded by a factor 2 (assuming the update rule $r_1:=2r_1$; other increments are of course possible). 
However, with other sketches such as SRTT, increasing $r_1$  
without resketching the whole matrix requires some care: For the SRTT $\mathcal{S}FD$, where $\mathcal{S}$ is a subsampling matrix, $F$ is a FFT/DCT matrix, and $D$ is a diagonal matrix of random signs, we store $FDAX_1$, $FDAX_2$, etc., and then subsample the big matrix $FDAX$.}

Provided the algorithm runs sucessfully, the computational complexity is $\mathcal{O}(mn\log n + r_1^3)$ for a dense matrix $A\in\mathbb{F}^{m\times n}$, where the two terms come from steps 3 and 7, assuming both $X$ and $Y$ are SRTT matrices. The cost of step 3 can be lower if $A$ has structure, in which case the $\mathcal{O}(mr_1\log m)$ cost in step 6 may become significant.

Regarding step 7, computing the singular values of $Y AX$, 
one can of course compute them via the SVD. We do this in most cases as the $O(r_1^3)$ cost is usually negligible. 
Alternatively,  since it suffices to retrieve estimates for $\sigma_i(YAX)$, one can compute the QR factorization $Y AX=QR$ and 
take the diagonal elements of $R$ as approximations of the singular values;  this works as a QR factorization of a matrix whose (right) singular vectors are randomized are rank-revealing with high probability~\cite{lawn237}.

\begin{algorithm2e}[H]\label{alg:ouralg}
\SetAlgoLined
\KwResult{Given an $m\times n$ matrix $A$, a tolerance $\epsilon$ (optional) and an estimate for the upper bound for rank $r_1$, this scheme computes an estimate for the $\epsilon$-rank of $A$.}
\textbf{1. Sketching:} \\
\nl Set $\tilde{r}_1 = \mbox{round}(1.1r_1)$. \\
\nl Draw $n\times \tilde{r}_1$ random embedding $X$.\\
\nl Form $AX$.\\
\textbf{2. Approximate orthogonalization:}\\
\nl Set $r_2 = 2\tilde{r}_1$. \\
\nl Draw $r_2\times m$ SRTT embedding $\Theta$. \\
\nl Form $\Theta AX$.\\
\textbf{3. Singular value estimates:}\\
\nl Compute the first $r_1$ singular values of $\Theta AX$.\\
\nl Let the numerical rank estimate be the first $\hat{r}$ s.t. $\sigma_{\hat{r} + 1}(\Theta AX) \leq\epsilon\|A\|$.  
If no such $\hat{r}$ exists, repeat the algorithm with a larger $r_1$, e.g. $r_1:=2r_1$,  by appending to the sketches.  If (an estimate of) $\|A\|$ is unavailable, use $\sigma_1(\Theta AX)$ to approximate $\|A\|$. \\
If no tolerance $\epsilon$ is provided, plot $\sigma_i(YAX)$ to detect a gap visually or consider measures as $\hat{r} = \max_i \sigma_i(YAX)/\sigma_{i+1}(YAX)$.
\caption{Rank Estimation with approximate orthogonalization.}
\end{algorithm2e}
\subsection{Rank estimation as part of randomized low-rank approximation}\label{sec:RERF}
It is worth discussing the case where Algorithm~\ref{alg:ouralg} is used as an initial step for randomized low-rank approximation, such as RSVD \cite{halko2011finding} or generalized Nystr\"om~\cite{nakatsukasa2020fast}. 
In such cases, computing the sketch $AX$ is needed anyway, so step 3 incurs no additional cost. 

Moreover, with generalized Nystr\"om, even $Y AX$ is required, as the low-rank approximant is of the form $AX(Y AX)^\dagger Y A$.
Furthermore, to evaluate $(Y AX)^\dagger$ a QR factorization $YAX=QR$ is computed. It follows that the additional work needed to execute Algorithm~\ref{alg:ouralg} is just to estimate the singular values of $R$. 
As described above, to do so one can simply take the diagonal elements of $R$, requiring just $O(r_1)$ work. 
Thus in the context of generalized Nystr\"om, a rank estimate can be obtained essentially for free. 

In the next section we explain how this rank estimate can be employed to speed up fixed-precision low-rank approximation. In particular, we note that using the singular value estimates we can reduce the size of a sketch as appropriate before performing the dominant computational work.

\subsubsection{Fixed-precision low-rank approximation problem}
Rank estimation as part of low-rank approximation is inherently linked to the fixed-precision problem, where one aims to compute a low-rank factorization $\hat{A}$ of unspecified rank such that $\|A - \hat{A}\| \leq \epsilon\|A\|$ for some user-specified tolerance $\epsilon$. We propose a scheme to combine rank estimation and low-rank approximation through existing error estimates. The algorithm will be used here for approximation in the Frobenius norm but can easily be extended to approximation in any other norm for which randomized low-rank approximation error estimates are available.  The aim will be to find an approximation of the form $\hat{A} = QB$, where $Q\in\F^{m\times r}$ has orthonormal columns and $r\leq n$. 

In their landmark paper of 2011, 
Halko, Martinsson and Tropp \cite{halko2011finding} describe and extensively analyze the randomized rangefinder (or randomized SVD) algorithm to compute such an approximation. This algorithm is displayed as Algorithm~\ref{alg:RF}.  Note that it can be extended to the randomized SVD algorithm by computing the SVD of $B = \hat{U}\Sigma V^T$ and then taking $U = Q\hat{U}$. A limitation of this factorization routine, as well as of other randomized low-rank approximation algorithms, is that we often wish to solve a fixed precision problem instead of a fixed rank algorithm, where the appropriate choice of the input rank $r$ is usually unknown. 

\begin{algorithm2e}[H]
\SetAlgoLined
\KwResult{Given an $m\times n$ matrix $A$, a target rank $r\geq 2$ and an oversampling parameter $p\geq 2$ such that $k+p\leq\min(m,n)$,  this scheme computes a $QB$-factorization of $A$.}
\nl Draw $n\times(r+p)$ random embedding matrix $X$.\\
\nl $[Q,\,\sim] = \text{qr}(AX,0)$. \quad (thin QR factorization)\\
\nl $B = Q^*A$.\\
\caption{Randomized Rangefinder \cite{halko2011finding}.}
\label{alg:RF}
\end{algorithm2e} 
As mentioned, we propose to use rank estimation to tackle this problem using a priori error estimates. Under the conditions of Algorithm \ref{alg:RF} with a Gaussian embedding matrix, it is known that \cite[Thm. 10.5]{halko2011finding}
\begin{equation}\label{eq:errRFfro1}
\mathbb{E}\|A - QB \|_F \leq \left( 1 + \frac{r}{p-1}\right)^{1/2}\left(\sum_{j>r}\sigma_j^2\right)^{1/2}.
\end{equation}
We can use the Rank Estimation Algorithm \ref{alg:ouralg} to compute estimates of the trailing singular values which appear in the estimate \eqref{eq:errRFfro1}. Suppose we have estimates $\hat{\sigma}_i = \sigma_i(YAX)\approx \sigma_i(A)$ for the first $r_1$ singular values of $A$, as a result of step 6 of Algorithm \ref{alg:ouralg}. Conservatively, and taking into account that there could be underestimation in the last few estimates if the tail is light, we set $\hat{\sigma}_i = \hat{\sigma}_{r_1}$ for $i = r_1 + 1,\dots,n$.  We can then choose the target rank $r$ in the rangefinder algorithm to be the first $r$ such that
\begin{equation*}
\left( 1 + \frac{r}{p-1}\right)^{1/2}\left(\sum_{j>r}\hat{\sigma}_j^2\right)^{1/2} \leq \epsilon\|A\|,
\end{equation*}
where $p$ is fixed to be 5 or 10 as suggested in \cite{halko2011finding} or where $p$ is fixed relative to $r$, say $p=0.1r$, as suggested in \cite{nakatsukasa2020fast}.

As mentioned before, the rank estimation part of this scheme requires only $\mathcal{O}(mr_1\log m + r_1^3)$ additional cost. More importantly, the singular value estimates it provides allow us to reduce the size of the sketch $AX$ ahead of the rangefinder process. This could result in considerable speed-up, as the QR factorization of $AX$ and the matrix-matrix multiplication $Q^*A$ are usually the dominant costs in the algorithm. Finally, we also know the rank of $QB$ is nearly minimal while (approximately) satisfying the accuracy criteria; by contrast, reducing the rank of a $QB$ factorization without rank estimation requires computing the SVD of $B$ and truncating it.

\begin{algorithm2e}[htbp]
\SetAlgoLined
\KwResult{Given an $m\times n$ matrix $A$, a tolerance $\epsilon$, an oversampling parameter $p \geq 2$ (either fixed absolutely or fixed relatively to $r$) and an upper bound for the numerical rank $r_1$, this scheme computes a QB factorization of $A$ such that $\|A- QB\|_F\lesssim\epsilon\|A\|$.}
\textbf{1. Rank Estimation:} \\
\nl Set $\tilde{r}_1 = \text{round}(1.1r_1)$, draw $n\times \tilde{r}_1$ random embedding $X$,  and form $AX$.\\
\nl Set $r_2 = 2\tilde{r}_1$, draw $r_2\times m$ SRTT $\Theta$, and form $\Theta AX$ \\
\nl Compute the first $r_1$ singular values of $\Theta AX$.\\
\nl Set $\hat{\sigma}_i = \sigma_i(\Theta AX)$ for $i=1,\dots, r_1$ and $\hat{\sigma}_i = \sigma_{r_1}(\Theta AX)$ for $i = r_1+1, \dots, n$.\\
\textbf{2. Randomized Rangefinder:} \\
\nl Set $r$ to be the smallest integer s.t. \vspace{-3mm}
\begin{equation}\label{eq:errRFfro}
\left( 1 + \frac{r}{p-1}\right)^{1/2}\bigg(\sum_{j>r}\hat{\sigma}_j^2\bigg)^{1/2} \leq \epsilon\|A\|.\vspace{-3mm}
\end{equation}
If no such $r$ exists, repeat the algorithm with 
a larger $r_1$, e.g. $r_1:=2r_1$, by appending to  the sketches.  If (an estimate of) $\|A\|$ is unavailable, use $\sigma_1(\Theta AX)$ to approximate $\|A\|$. \\
\nl $AX \leftarrow AX(:,\, 1:(r+p))$. \\
\nl $[Q,\,\sim] = \text{qr}(AX,0)$. \quad (thin QR factorization)\\
\nl $B = Q^*A$.
\caption{Rank Estimation combined with Randomized Rangefinder.}
\label{alg:RERF}
\end{algorithm2e}

\section{Numerical experiments}\label{sec:exp}
We introduce synthetic example matrices to experimentally support the theoretical results. Each of the matrices is diagonal and square of dimension $n = 10^5$.  They are, however, treated as dense matrices by the algorithms here. The singular values are on the diagonal in decreasing order.  We distinguish the following singular value spectra and resulting matrices, inspired by \cite{tropp2019streaming}:
\begin{enumerate}
\item Gaps in spectrum: we define a matrix $A_{G}$ with $\sigma_i =  1$ for $i = 1,\dots, 100$, $\sigma_i = 10^{-4}$ for $i = 101,\dots, 200$, $\sigma_i = 10^{-8}$ for $i = 201,\dots, 300$,  $\sigma_i = 10^{-12}$ for $i = 301,\dots, 400$ and $\sigma_i = 10^{-16}$ for $i > 400$.
\item Polynomial decay: each singular value takes the value $\sigma_i = i^{-p}$ for a parameter $p$.  We define a matrix $A_{SP}$ with a slow polynomial decaying spectrum (SP) with $p = 1$, and a matrix $A_{FP}$ with fast polynomial decay (FP) with $p = 3$. 
\item Exponential decay: the singular values are logarithmically equally spaced between 1 and $\sigma_n$, that is $\sigma_i = 10^{-q(i-1)}$ for a parameter $q$. We study a matrix $A_{SE}$ with slow exponential decay (SE) for which $q = 0.01$ and a matrix $A_{FE}$ with fast exponential decay (FE) for which $q = 0.5$. 
\end{enumerate} 
Each of the matrices has spectral norm $\|A\| = 1$ so the \textit{relative} accuracy notation will be suppressed in this section.
In each of the experiments in subsections 6.2-6.4, we will choose the embedding matrix $X$ to be a hashed randomized DCT matrix and $Y = \Theta$ to be a subsampled randomized DCT matrix. 
We display the perfomance of Algorithms \ref{alg:ouralg} and \ref{alg:RERF} in various numerical experiments.

\subsection{Detecting gaps in the spectrum}\label{sec:gaps}
Here we demonstrate that Algorithm \ref{alg:ouralg} works extremely well for approximately low-rank matrices that have gaps in their singular value spectrum. The algorithm can in fact be used in two ways: 1) if the specified tolerance is within the gap between two singular values, the algorithm will perform very well in identifying the correct rank even for a small number of samples, and 2) by considering the gaps in the singular value spectrum of $YA X$, one can identify gaps in the singular value spectrum of $A$ and use this to inform the $\epsilon$ value and target rank.  

To illustrate this we compare the singular values $\sigma_i(Y AX)$ to $\sigma_i(A)$ for various values of $r_1$ (see Algorithm \ref{alg:ouralg}). Note that we oversample by 10\% and disregard the singular values associated with oversampling. The results are shown in Figure \ref{fig:gaps}.  The figure displays the remarkable effectiveness of the algorithm, where gaps in the spectrum are clearly identified even for small values of $r_1$. 

We use Gaussian embeddings for the column space and SRCTs for the row space, to allow for an adaptive procedure. That is, we start with a Gaussian $X$ of dimension 110, compute $YAX$ and its singular values, and then increase the sketch dimension of $X$ by a 100 until we can identify the gap. This is how Algorithm~\ref{alg:ouralg} could be used if there is no upper bound on $r_1$ available.  We compare against the performance of the DOS algorithms~\cite{ubaru2016fast} (Chebyshev-based) and~\cite{Ubaru2017} (Lanczos-based).  Our rank estimation algorithm took 18.33 seconds to reveal the gap at the 400th singular value. The runtime of the DOS algorithms was 
2223 sec.  for~\cite{ubaru2016fast} and 199.1 sec. for \cite{Ubaru2017}. The DOS plots are also shown in Figure~\ref{fig:gaps}.

It is inherently difficult to detect gaps visible on a log-scale using a DOS algorithm.  The algorithms are based on estimating a sum of Dirac delta functions using a sum of smooth functions such as Gaussians. To be able to distinguish between very small eigenvalues, the width of the Gaussian has to be as small as the smallest eigenvalue. That is, it would be necessary to have a very high-resolution spectral density. The resolution refers to the size of the interval in which we estimate the number of eigenvalues~\cite{specdensity2016}.
However,  a fine resolution implies the number of sample points needed to detect larger eigenvalues is large. As a result, the computational effort necessary is not tractable. Figure~\ref{fig:gaps} shows that even if the DOS is estimated perfectly for the chosen Gaussian standard deviation, the gaps are not visible.

\begin{figure}
\centering
\includegraphics[width =\linewidth]{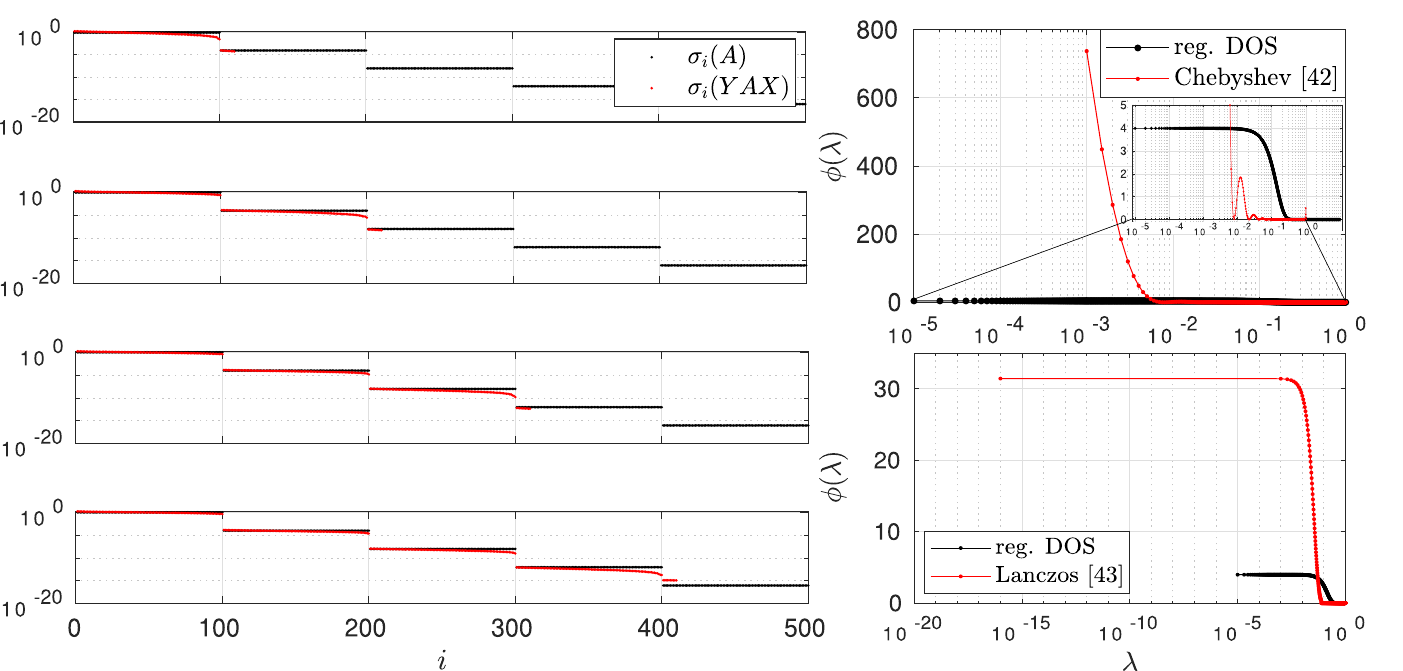}
\caption{On the left, the singular values of $A_G$ and $YA_{G}X$, where $X$ is a Gaussian embedding and $Y$ is an SRTT matrix with a discrete cosine transform (SRCT). The plots from top to bottom correspond to $r_1 = 110, 210,310,410$ respectively.  
The gaps that exist in the spectrum of $A$ are also visible in the spectrum of $YA_{G}X$, even for values of $r_1$ close to the location of the gaps.  On the right, the density of state plots resulting from the Chebyshev-based~\cite{ubaru2016fast} and Lanczos-based algorithms~\cite{Ubaru2017}. The algorithms are unable to identify the gaps for very small singular values. Even the exact regularized DOS can only identify the bulk of eigenvalues is of order $10^{-2}$ or smaller. See the references for further details on interpretation of these graphs.}
\label{fig:gaps}
\end{figure}

\subsection{Robustness of the rank estimator}
The other four example matrices discussed (SP, FP, SE, and FE) do not have significant gaps in their singular value spectra, which makes the numerical rank less well-defined.  Rather than focusing on the numerical rank estimate, $\hat{r}$, it thus makes sense to look at $\sigma_{\hat{r} + 1}(A)$ and its proximity to $\epsilon$.  Figure \ref{fig:robustness} shows the results of Algorithm \ref{alg:ouralg} applied 100 times to each of the example matrices.  We see that, although the rank estimate $\hat{r}$ we recover is often an underestimate when there are no significant gaps between singular values near the desired accuracy $\epsilon$,  $\sigma_{\hat{r} + 1}(A)$ is always very close to $\epsilon$. One could argue this is not necessarily a shortcoming of the algorithm, but rather a display of the difficulties associated to the concept of the exact numerical rank $\text{rank}_{\epsilon}(A)$ when $\sigma_r(A) \approx \sigma_{r+1}(A)$. Importantly, 
the algorithm always found a rank that satisfies the two desiderata set out in Section~\ref{sec:whatisgoal}. 
In particular, when there are significant gaps in the singular value spectrum we recover the exact numerical rank in all except one instance; see the rightmost column of the figure (and the forthcoming Figure~\ref{fig:gaps}). 
\begin{figure}
\centering
\includegraphics[width = \linewidth]{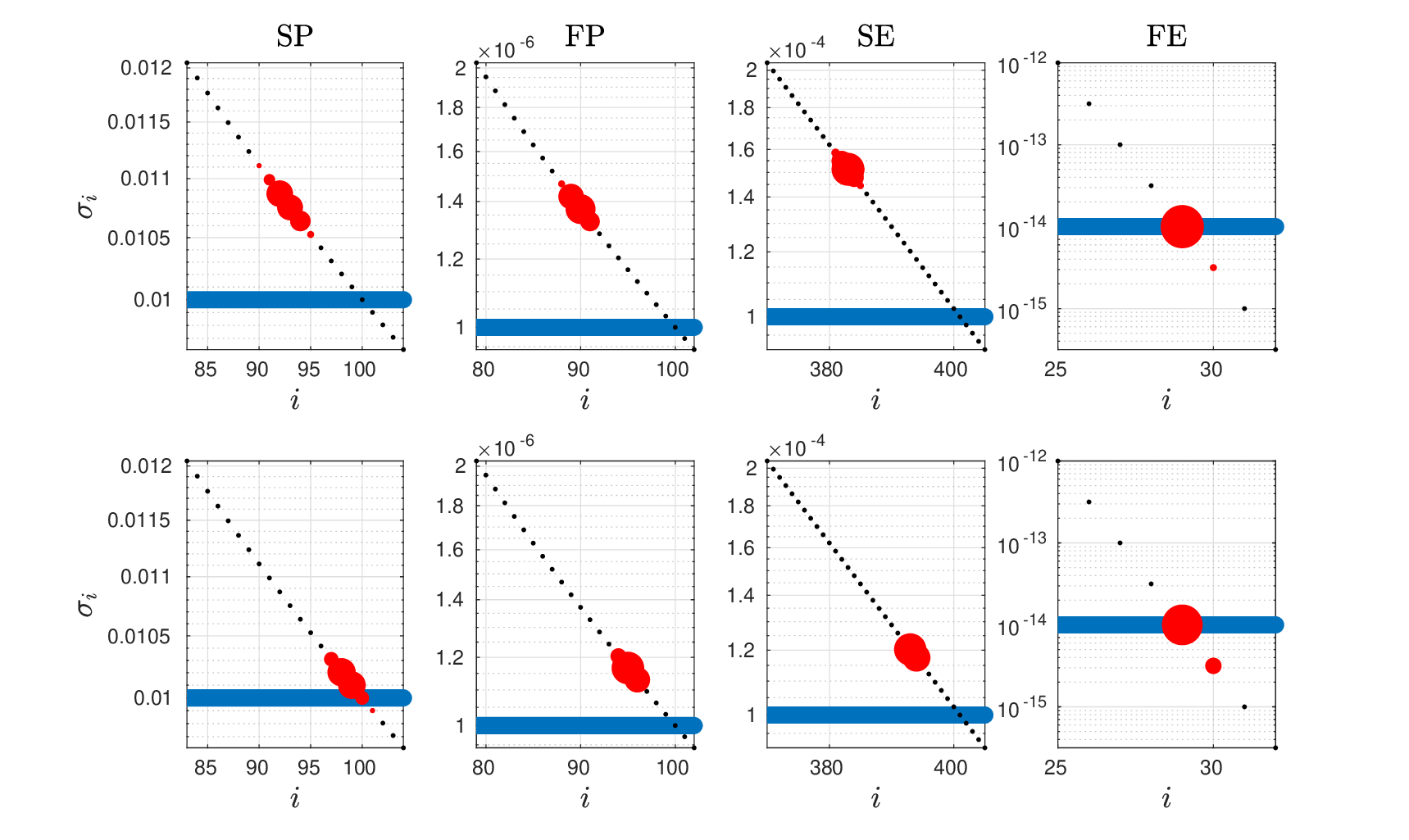}
\caption{The results of Algorithm 1 applied 100 times 
to four dense synthetic test matrices with singular value spectra that decay Slow Polynomially (SP), Fast Polynomially (FP), Slow Exponentially (SE), and Fast Exponentially (FE), as defined at the start of  Section \ref{sec:exp}. Each matrix is square diagonal with $n=10^5$. The blue line indicates the tolerance $\epsilon$ and the blacks dots represent the singular values $\sigma_i(A)$. The red dots indicate the singular values $\sigma_{\hat{r} + 1}(A)$, where $\hat{r}$ is the numerical rank estimate found by the algorithm. The area of the dot represents the frequency with which it a specific $\hat{r}$ was found.  The algorithm would work perfectly if the singular value on the blue line were found in each iteration, as is (almost) the case for the FE matrix.  The upper and lower row correspond to upper limits $r_1 = 2\,\text{rank}_{\epsilon}(A)$ and $r_1 = 4\,\text{rank}_{\epsilon}(A)$ respectively.}
\label{fig:robustness}
\end{figure}

\subsection{Comparison with other numerical rank estimation algorithms}\label{sec:expcompRE}
Suppose that the goal is to obtain the numerical rank of a given matrix for some tolerance $\epsilon$. We compare five different methods to achieve this for a $10^5\times 10^5$ matrix with slow exponential decaying spectrum (SE) and various values of $\epsilon$ ranging from $10^{-1}$ to $10^{-12}$.  The results are shown in Figure \ref{fig:numericalrankcomp}. 

The algorithms we compare are rank estimation (RE) with different parameters $r_1$,  the rank estimate obtained from RSVD with different parameters for $r$, the rank estimate obtained from randQB\_EI \cite{Yu2018efficient} with parameter $b=100$, the DOS algorithm with Chebyshev filters introduced in \cite{ubaru2016fast} (Cheb) and the Lanczos DOS algorithm introduced in \cite{Ubaru2017} (Lanczos), both with degree $50$ and $30$ samples, which are the default parameters in the authors' implementation\footnote{The implementation used is available at https://shashankaubaru.github.io/codes.html.}. 
We see that the latter two DOS algorithms do not accurately compute the rank for smaller values of $\epsilon$. A higher degree polynomial approximation would be necessary for this, which will slow down the computation. The DOS algorithms work very well for smaller matrices, but in the case of small tolerances and extremely large data, where each matrix-vector operation is costly in communication, the algorithms are outperformed by RE, RSVD, and randQB\_EI.\footnote{We used the implementation of randQB\_EI by the authors of \cite{Yu2018efficient}, available at  https://github.com/WenjianYu/randQB\_auto.}

As for these three, we see that RE is considerably faster in the context of rank estimation. Note also that the difference in running time will be more pronounced when the matrix is too large to fit into fast memory, as the other algorithms need to pass over the data more than once. Although the numerical rank is slightly underestimated for each value of $\epsilon$ with the Rank Estimation algorithm, and is exact for the other algorithms, we see that $\sigma_{\hat{r} + 1}$ is always extremely close to $\sigma_{\text{rank}_{\epsilon}(A) + 1}\approx \epsilon$.  This requirement for a good rank estimation set out in the introduction is thus satisfied. 

\begin{figure}
\hspace{-8mm}\includegraphics[width =1.1\linewidth]{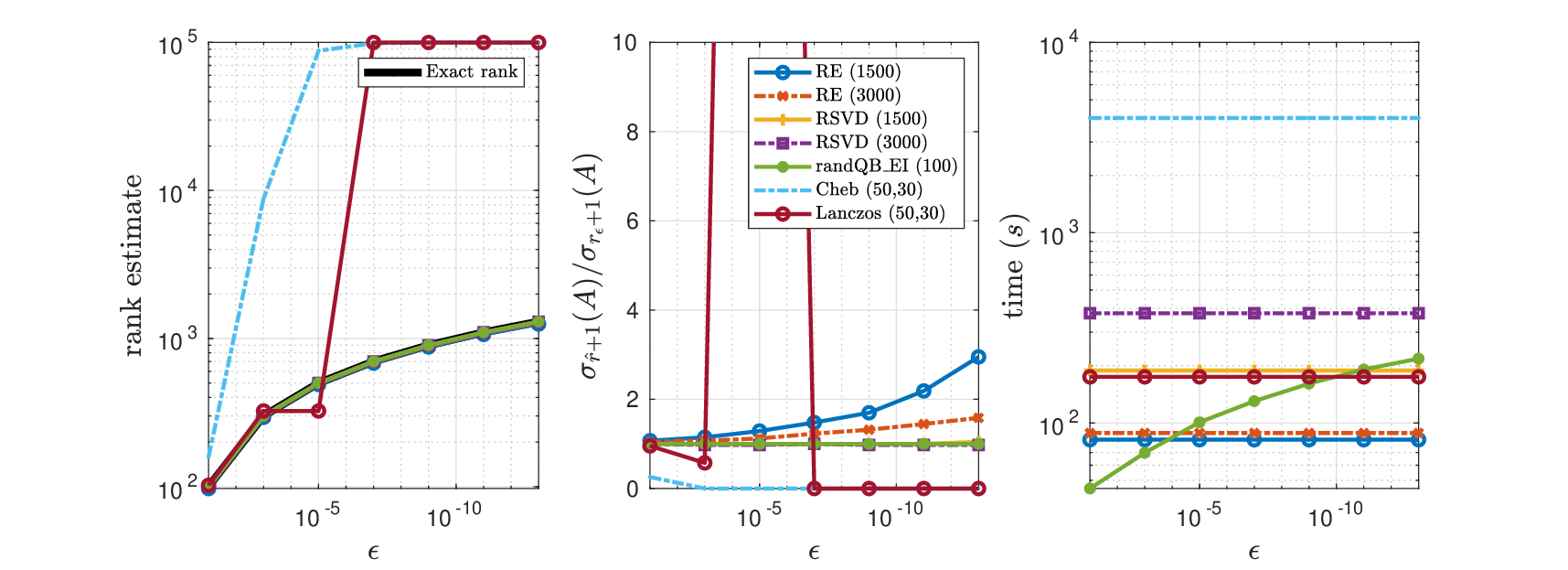}
\caption{We compare four algorithms to compute the numerical rank of a real square $10^5\times 10^5$ matrix with slow exponentially decaying singular values (SE).  The algorithms we compare are rank estimation (RE) with different parameters $r_1 = 1500,3000$,  the rank estimate obtained from RSVD with different parameters for $r=1500,3000$, the rank estimate obtained from randQB\_EI with parameter $b=100$, the DOS algorithm with Chebyshev filters introduced in \cite{ubaru2016fast} (Cheb) and the Lanczos DOS algorithm introduced in \cite{Ubaru2017} (Lanczos), both with degree $50$ and $\text{nvecs} = 30$. }
\label{fig:numericalrankcomp}
\end{figure}

\subsection{The fixed-precision problem}\label{sec:fixedprecision}
Often when dealing with large-dimensional approximately low-rank matrices, our aim is to obtain a low-rank approximation that is accurate within a given tolerance, as discussed in Section \ref{sec:RERF}. We compare three different algorithms to obtain such an approximation, Rank Estimation combined with Randomized Rangefinder (Algorithm \ref{alg:RERF}), Randomized Rangefinder (Algorithm \ref{alg:RF}), and the adaptive algorithm RandQB\_EI \cite{Yu2018efficient}). The results are shown in Figure \ref{fig:fixedprecision}.  The figure shows that the adaptive algorithm RandQB\_EI is the fastest in most cases.

\begin{figure}
\hspace{-8mm}\includegraphics[width =1.1 \linewidth]{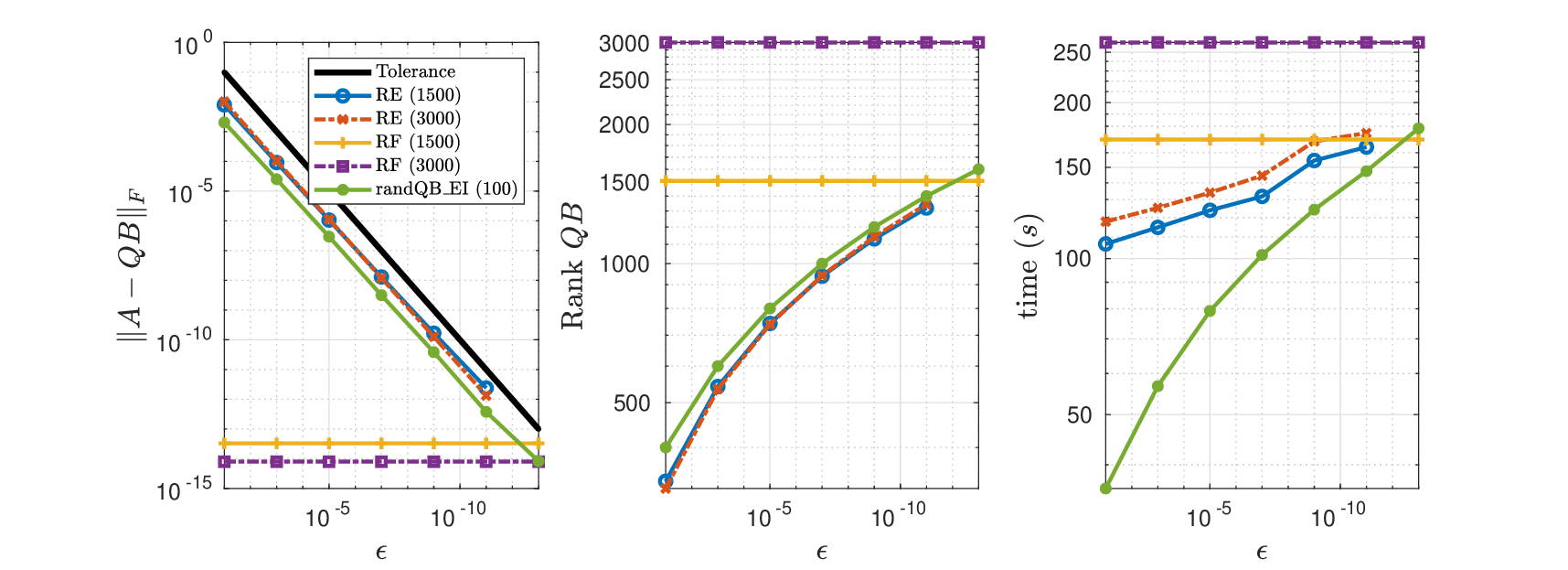}
\caption{ We compare four algorithms to compute a low-rank $QB$ approximation to solve the fixed precision problem $\|A_{SE} - QB\|\leq \epsilon$, where $A_{SE}$ is a $10^5\times 10^5$  square matrix with slow exponential decaying singular values.  The algorithms include rank estimation with rangefinder (RE), rangefinder (RF) and RandQB\_EI. We compare for the parameter choices $r_1 = 1500,3000$, $r=1500,3000$ and $b = 100$ for each of the algorithms respectively.  }
\label{fig:fixedprecision}
\end{figure}

It is difficult to compare these algorithms as the performance is very dependent on the context of the problem, and the parameters chosen.  We argue that our algorithm is the best choice when it is very expensive to communicate with the matrix, and a reliable prior rank estimate is unavailable. In this case, overestimating the target rank in randomized rangefinder and randomized SVD is much more expensive than it is in Algorithm \ref{alg:RERF}, rank estimation with rangefinder. This is portrayed in Figure \ref{fig:fixedprecisionoverest}.

\begin{figure}
\centering
\includegraphics[width =0.8\linewidth]{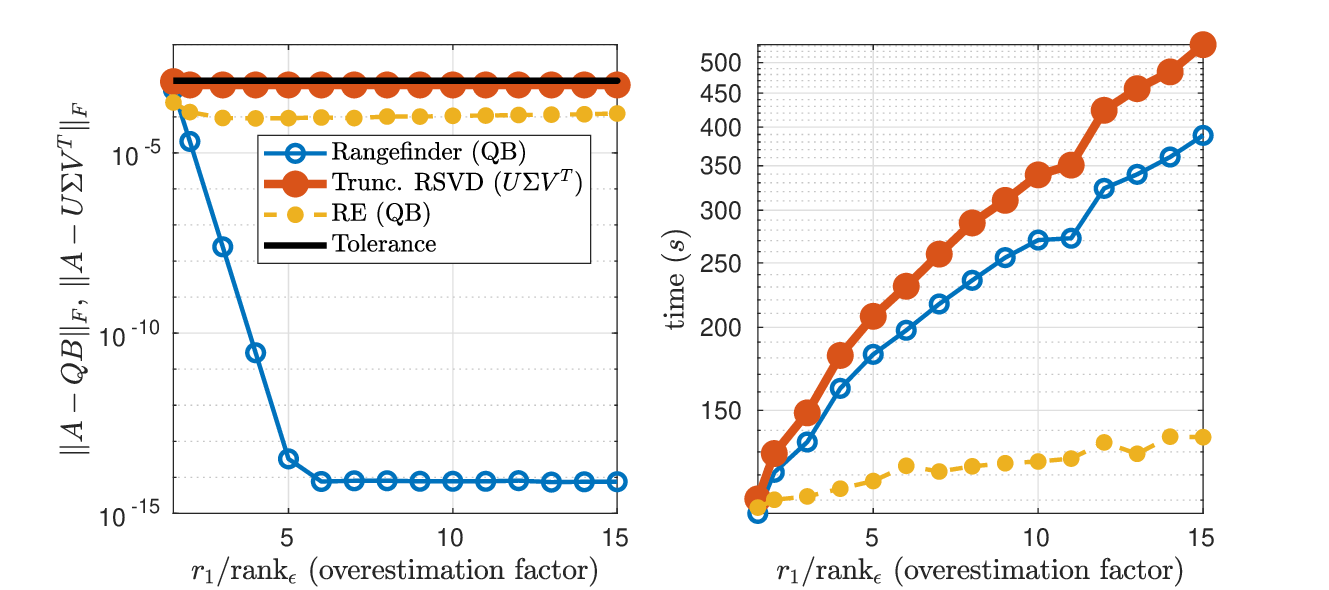}
\caption{We compare three algorithms for fixed precision estimation that require only two views of the matrix, randomized rangefinder (Algorithm \ref{alg:RF}), randomized SVD with truncation and rank estimation combined with randomized rangefinder (Algorithm \ref{alg:RERF}, shown as RE (QB)).  For a fixed tolerance $\epsilon = 10^{-3}$, we vary the input target rank/upper bound for the rank. Although each of the algorithms satisfy the condition of an error smaller than $\epsilon$, we see that Algorithm \ref{alg:RERF} is much faster, especially for greater overestimates. The matrix is $A_{SE}$ of dimension $n=10^5$. }
\label{fig:fixedprecisionoverest}
\end{figure}

\section{Discussion}
Let us conclude with a few extensions and open problems related to Algorithm~\ref{alg:ouralg}. 

As presented, Algorithm~\ref{alg:ouralg} does not take into account the structure of $A$ such as symmetry or sparsity. It would be of interest to devise a variant that respects such structure. 

When the input rank $r_1$ is insufficient, i.e. $r_1<\rank_\epsilon(A)$, Algorithm~\ref{alg:ouralg} needs to be rerun with a larger $r_1$. It would be helpful (if possible) to be able to find an appropriate new $r_1$ from the information obtained from the first run. 

While we primarily focused on rank estimation, the analysis indicates that many of the (leading) singular values of $A$ can be estimated by $\sigma_i(\Theta AX)$. 
Tuning each $\sigma_i(\Theta AX)$ to obtain a reliable (ideally unbiased) estimator is left for future work. 

\section*{Acknowledgement}
	We would like to thank the referees for their valuable comments, which improved the contents and exposition of the paper considerably. We also thank Nick Trefethen for valuable comments.

\appendix

\section{Additional Proofs}\label{sec:appendix}

\subsection{Proof of Corollary \ref{lem:YAX}}\label{sec:app2}
We wish to prove the following lemma from which Corollary \ref{lem:YAX} follows in a straightforward fashion:
\begin{lemma}\label{lem:BG}
	Let $U\in\R^{m\times r_1}$ have orthonormal columns and let $\Theta\in\R^{r_2\times m}$ be a SRTT matrix as in~\eqref{eq:Theta}, where the trigonometric transform 
$F$ in $\Theta$ satisfies $\eta = m\max|F_{ij}|^2$. Let $0<\\tilde{\epsilon}<1/3$ and $0<\delta<1.$ If
\begin{equation}
	6\eta\tilde{\epsilon}^{-2}\left[\sqrt{r_1} + \sqrt{8 \log(m/\delta)}\right]^2\log(r_1/\delta)\leq r_2\leq m,
\end{equation}
then, with failure probability at most $3\delta$
\begin{equation}
	\sqrt{1-\tilde{\epsilon}}\leq \sigma_{\min}(\Theta U)\leq\sigma_{\min}(\Theta U)\leq\sqrt{1+\tilde{\epsilon}}.
\end{equation}
\end{lemma}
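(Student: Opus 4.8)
The plan is to reduce the statement to a known concentration result for the extreme singular values of a subsampled randomized Hadamard transform applied to a matrix with orthonormal columns, and then to observe that the relevant proof only uses the Hadamard matrix through the single scalar $\eta = m\max_{ij}|F_{ij}|^2$, so it extends verbatim to a general trigonometric transform $F$. Concretely, I would first recall the structure of $\Theta U$ where $\Theta = \sqrt{m/r_2}\, SFD$: writing $\mathcal{F} = FD$, the matrix $\mathcal{F}U$ still has orthonormal columns (since $F$ is orthogonal and $D$ is a sign matrix, hence orthogonal), so $\Theta U = \sqrt{m/r_2}\, S(\mathcal{F}U)$ is a (scaled) uniform row-sample of an $m\times r_1$ orthonormal matrix. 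The singular values of $\Theta U$ are governed by how close $(m/r_2)\, (\mathcal{F}U)^T S^T S (\mathcal{F}U)$ is to the identity $I_{r_1}$.

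The key step is the coherence (row-norm flattening) bound: with the random sign flips $D$ in place, for a fixed orthonormal $U$ the matrix $\mathcal{F}U = FDU$ has, with failure probability at most $\delta$, all row norms bounded by roughly $\sqrt{\eta/m}\,\big(\sqrt{r_1} + \sqrt{8\log(m/\delta)}\big)$. This is exactly where $\eta = m\max|F_{ij}|^2$ enters, and it is the only place the specific transform $F$ is used; the argument is a standard union bound over the $m$ rows combined with a scalar concentration inequality (Hoeffding/bounded-difference) for each row norm $\|e_j^T FDU\|$, using $|(F)_{jk}| \le \sqrt{\eta/m}$. Conditioning on this event, the matrix $\mathcal{F}U$ has uniformly small coherence, and then a matrix Chernoff / matrix Bernstein bound for sums of independent rank-one terms (the sampled rows) gives that $\|(m/r_2)(\mathcal{F}U)^T S^T S (\mathcal{F}U) - I_{r_1}\| \le \epsilon$ with failure probability at most $2\delta$, provided $r_2 \gtrsim \eta\,\epsilon^{-2}\big(\sqrt{r_1} + \sqrt{8\log(m/\delta)}\big)^2 \log(r_1/\delta)$, which is precisely the stated sampling condition (with the explicit constant $6$). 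Taking square roots of the resulting eigenvalue bounds $1-\epsilon \le \lambda_i \le 1+\epsilon$ yields $\sqrt{1-\epsilon} \le \sigma_{\min}(\Theta U) \le \sigma_{\max}(\Theta U) \le \sqrt{1+\epsilon}$, and the total failure probability is $3\delta$ by a union bound over the coherence event and the matrix-concentration event.

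The main obstacle — or rather, the only nontrivial technical point — is verifying that the two cited results, \cite[Thm.~3.1]{Tropp2011ImprovedTransform} and \cite[Lem.~4.1]{boutsidis2013improved}, which are stated for the Hadamard transform, really do go through with $\eta$ in place of the constant $1$ appropriate to Hadamard. I would handle this by isolating the coherence lemma, restating it with a general $\eta$, and checking that its proof uses only $|F_{ij}| \le \sqrt{\eta/m}$ and the orthogonality of $F$; the subsequent matrix-concentration step is transform-agnostic and depends on $F$ only through the coherence bound it inherits. Since this is ``essentially a restatement'' of the cited theorems, I would present the argument at the level of indicating these two steps and where $\eta$ propagates, rather than reproducing the full concentration computations. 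Finally, Lemma~\ref{lem:YAX} follows immediately: setting $B = AX$, writing its thin QR factorization $B = \hat Q\hat R$ so that $\Theta B = \Theta\hat Q\hat R$, and applying \eqref{eq:svThetaB} together with Lemma~\ref{lem:BG} applied to $U = \hat Q$ gives $\sqrt{1-\epsilon} \le \sigma_i(\Theta AX)/\sigma_i(AX) \le \sqrt{1+\epsilon}$ for all $i = 1,\dots,r_1$ under the stated condition on $r_2$.
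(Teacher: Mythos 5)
Your proposal follows essentially the same two-step route as the paper: a coherence-flattening lemma showing that the random sign mixing $FD$ bounds every row norm of $FDU$ by $\sqrt{\eta/m}\,(\sqrt{r_1}+\sqrt{8\log(m/\delta)})$ (failure probability $\delta$, via a convex-Lipschitz/sign-vector concentration bound and a union bound over rows), followed by a matrix Chernoff argument for uniform row sampling of a low-coherence orthonormal matrix (failure probability $2\delta$), combined by a union bound. You also correctly identify that $\eta$ propagates only through the coherence step and that the sampling step is transform-agnostic, which is exactly the observation the paper uses to extend the Hadamard-specific references \cite{Tropp2011ImprovedTransform,boutsidis2013improved} to general SRTTs.
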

The proof of Lemma \ref{lem:BG} consists of two main steps: first to show that premultiplying an orthonormal matrix with a randomized trigonometric transform smooths out information in the rows (or equivalently, equalizes the row norms), and second to show that sampling rows from a matrix with ‘smoothed out’ information works well. The proof is analogous to those by Boutsidis and Gittens~\cite{boutsidis2013improved}, but as their discussion only covers subsampled randomized Hadamard transforms instead of general SRTT matrices, we include the proof for completeness. 

A key concept in the proof is coherence, which is strongly related to the notion of `smoothing out' information in the rows. Let $A$ be a tall and skinny matrix and let $U$ be a matrix with orthonormal columns spanning the column space of $A$. Then the coherence of $A$ is defined
\begin{equation}
\mu(A) = \max \|U_{(i)}\|_2^2,
\end{equation}
where $U_{(i)}$ denotes the $i$th row of $U$. The minimal, and optimal, coherence of a matrix of size $m\times r$ is $\mu(A) = r/m$. This can be interpreted as all rows being equally important, which makes sampling rows more effective. The following lemma essentially states that applying a randomized trigonometric transform reduces the coherence of a matrix.
\begin{lemma}[Tropp \cite{Tropp2011ImprovedTransform}]\label{lemma:mixingcoherence}
Let $U\in\F^{m\times r}$ have orthonormal columns. Let $F$ be a square orthogonal matrix of dimension $m$ with $\eta = m\max|F_{ij}|^2$, $D$ be a $m\times m$ diagonal matrix of independent random signs, and define $\mathcal{F} = FD$. Let $0<\delta<1$. Then, with probability at least $1-\delta$
\begin{equation*}
    \mu(\mathcal{F}U) \leq \frac{\eta}{m}\left(\sqrt{r} + \sqrt{8 \log(m/\delta)} \right)^2.
\end{equation*}
\end{lemma}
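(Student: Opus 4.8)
The plan is to bound each individual row norm of $\mathcal{F}U$ and then take a union bound over the $m$ rows. Since $\mathcal{F}=FD$ is orthogonal, $\mathcal{F}U$ again has orthonormal columns, so $\mu(\mathcal{F}U)=\max_{1\le i\le m}\|(\mathcal{F}U)_{(i)}\|_2^2$. Writing $\varepsilon=(\varepsilon_1,\dots,\varepsilon_m)$ for the random signs on $\operatorname{diag}(D)$ and $u_j\in\F^r$ for the (conjugate) transpose of the $j$th row of $U$, the $i$th row of $\mathcal{F}U$ is $\sum_{j=1}^m F_{ij}\varepsilon_j u_j^*$, so I would focus on the random variable $g_i(\varepsilon):=\big\|\sum_{j=1}^m F_{ij}\varepsilon_j u_j\big\|_2=\|U^*D_i\varepsilon\|_2$, where $D_i:=\operatorname{diag}(F_{i1},\dots,F_{im})$. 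It then suffices to show, for each fixed $i$, that $g_i(\varepsilon)\le\sqrt{\eta/m}\,\big(\sqrt r+\sqrt{8\log(m/\delta)}\,\big)$ with failure probability at most $\delta/m$.

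First I would control the mean: by Jensen's inequality and independence of the signs, $\mathbb{E}\,g_i\le(\mathbb{E}\,g_i^2)^{1/2}=\big(\sum_j|F_{ij}|^2\|u_j\|_2^2\big)^{1/2}\le\big(\tfrac{\eta}{m}\sum_j\|u_j\|_2^2\big)^{1/2}=\sqrt{\eta r/m}$, using $|F_{ij}|^2\le\max_{k,l}|F_{kl}|^2=\eta/m$ and $\sum_j\|u_j\|_2^2=\|U\|_F^2=r$. Next, for the fluctuations, I would use that $g_i(\varepsilon)=\|U^*D_i\varepsilon\|_2$ is a \emph{convex} function of $\varepsilon$ that is Lipschitz, in the Euclidean metric, with constant $\|U^*D_i\|_2\le\|U^*\|_2\,\|D_i\|_2=\max_j|F_{ij}|\le\sqrt{\eta/m}$. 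The concentration inequality for convex Lipschitz functions of independent variables supported in $[-1,1]$ (a consequence of Talagrand's inequality; the constant $8$ enters because rescaling $[-1,1]^m$ to $[0,1]^m$ doubles the Lipschitz constant) then gives $\mathbb{P}\{g_i\ge\mathbb{E}\,g_i+t\}\le\exp\!\big(-mt^2/(8\eta)\big)$. Choosing $t=\sqrt{(8\eta/m)\log(m/\delta)}$ makes this equal $\delta/m$, and off the corresponding event $g_i\le\sqrt{\eta r/m}+\sqrt{(8\eta/m)\log(m/\delta)}=\sqrt{\eta/m}\,\big(\sqrt r+\sqrt{8\log(m/\delta)}\,\big)$. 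A union bound over $i=1,\dots,m$ then yields $\mu(\mathcal{F}U)=\max_i g_i(\varepsilon)^2\le\frac{\eta}{m}\big(\sqrt r+\sqrt{8\log(m/\delta)}\,\big)^2$ with probability at least $1-\delta$.

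The step I expect to require the most care is the Lipschitz estimate: it is essential to bound $\|U^*D_i\|_2$ by factoring operator norms, $\|U^*D_i\|_2\le\|U^*\|_2\|D_i\|_2$, rather than passing through the Frobenius norm $\|U^*D_i\|_F=(\sum_j|F_{ij}|^2\|u_j\|_2^2)^{1/2}$, which is only $\sqrt{\eta r/m}$ and would reintroduce a spurious factor $\sqrt r$ inside the tail term, destroying the shape of the claimed bound. Everything else --- the Jensen bound on the mean, the choice of $t$, and the union bound --- is routine book-keeping, and the precise constant $8$ is simply inherited from whichever form of the convex-concentration inequality one cites.
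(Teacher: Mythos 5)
Your proposal is correct and follows essentially the same route as the paper's proof: fix a row, view its norm as a convex Lipschitz function of the Rademacher vector with constant $\sqrt{\eta/m}$, bound the mean by $\sqrt{\eta r/m}$ via Jensen and the Frobenius norm, apply the concentration inequality for convex Lipschitz functions of random signs (the paper's Proposition \ref{prop.appproofcoherence}), and finish with a union bound over the $m$ rows. The only cosmetic difference is that you work with $\|U^*D_i\varepsilon\|$ rather than the paper's $\|\varepsilon^*EU\|$, which is the same quantity.
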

We use the following lemma in the proof.
\begin{proposition}[\cite{Tropp2011ImprovedTransform}, Prop. 2.1]\label{prop.appproofcoherence} Suppose $f$ is a convex function that satisfies the Lipschitz condition
\begin{equation*}
    |f(x) - f(y)| \leq L\|x-y\| \quad \forall x,y.
\end{equation*}
Let $X$ be a vector of random signs. Then for all $t\geq 0$, 
\begin{equation*}
    \mathbb{P} \{f(X) \geq \mathbb{E}f(X) + Lt \} \leq e^{-t^2/8}.
\end{equation*}
\end{proposition}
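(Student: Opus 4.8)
This is a classical concentration estimate; the route I would take is to derive it from Talagrand's convex distance inequality, which is the standard mechanism producing sub-Gaussian tails for convex Lipschitz functions of independent bounded variables. By homogeneity we may assume $L=1$ (replace $f$ by $f/L$). Write $X\in\{-1,1\}^m$ for the vector of random signs, and extend $f$ to a convex $1$-Lipschitz function on $\R^m$ (standard: any convex Lipschitz function on a convex set admits such an extension).

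First I would reduce the upper tail to a statement about the convex distance. Fix a level $a$ and set $A_a=\{x\in\{-1,1\}^m:f(x)\le a\}$. For any probability measure $\mu$ on $A_a$ the barycentre $z_\mu=\sum_y\mu(y)y$ lies in $\operatorname{conv}(A_a)$, so $f(z_\mu)\le a$ by convexity, while, since the coordinates take values $\pm1$, $|x_i-(z_\mu)_i|\le 2\,\mu(\{y:y_i\ne x_i\})$ in each coordinate. Minimizing over $\mu$ and invoking the measure-theoretic formula for Talagrand's convex distance, $d_T(x,A_a)=\inf_{\mu}\big\|(\mu(\{y:y_i\ne x_i\}))_i\big\|_2$, yields a point $z^\ast\in\operatorname{conv}(A_a)$ with $\|x-z^\ast\|_2\le 2\,d_T(x,A_a)$, hence $f(x)\le f(z^\ast)+\|x-z^\ast\|_2\le a+2\,d_T(x,A_a)$. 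Therefore $\{f(X)\ge a+s\}\subseteq\{d_T(X,A_a)\ge s/2\}$. Talagrand's inequality, $\mathbb{E}\big[e^{d_T(X,A_a)^2/4}\big]\le 1/\mathbb{P}(X\in A_a)$, then gives by Markov
\[
   \mathbb{P}\{f(X)\ge a+s\}\ \le\ \frac{e^{-s^2/16}}{\mathbb{P}(f(X)\le a)} .
\]
Choosing $a$ to be a median already produces a sub-Gaussian upper tail. To land exactly on the constant $1/8$ centred at $\mathbb{E}f(X)$ one either recenters from the median to the mean (integrating the two-sided tails to control $|\mathbb{E}f(X)-\mathrm{med}\,f(X)|$) while bookkeeping the convex-distance constants more sharply, or, more cleanly, runs the Herbst/entropy argument directly: tensorize the entropy functional, apply the modified logarithmic Sobolev inequality for the symmetric Bernoulli law, and use convexity to bound the discrete gradient of $f$ by $\|\nabla f\|$, so that $\sum_i(\partial_i f)^2\le L^2$; with variance proxy $L^2\,(b-a)^2=4L^2$ this integrates up to $\mathbb{P}\{f(X)\ge\mathbb{E}f(X)+Lt\}\le e^{-t^2/8}$.

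The essential difficulty — and the reason convexity appears in the hypothesis — is that a naive bounded-differences (Azuma/McDiarmid) bound only sees single-sign flips, each of which moves the argument by Euclidean length $2$ and hence changes $f$ by up to $2L$; summing $m$ such increments costs a spurious factor $\sqrt m$ in the exponent. Convexity is exactly what allows $\sqrt m\,L$ to be replaced by $L$, and capturing this gain is precisely the content of Talagrand's convex distance inequality (or, equivalently, of the log-Sobolev machinery). The only other delicate point is constant-chasing: matching the precise factor $1/8$ and centring at the mean rather than at a median.
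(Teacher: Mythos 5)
The paper does not prove this proposition: it is imported verbatim from Tropp's SRHT paper (Prop.~2.1 there), which in turn quotes it from the concentration-of-measure literature, so there is no in-paper argument to compare against. Your sketch is a correct account of how the result is actually established, and the individual steps of the convex-distance reduction (barycentre in $\operatorname{conv}(A_a)$, the coordinatewise bound $|x_i-(z_\mu)_i|\le 2\mu(\{y:y_i\ne x_i\})$, the dual formula for $d_T$, and Talagrand's inequality $\mathbb{E}\,e^{d_T(X,A)^2/4}\le 1/\mathbb{P}(A)$) are all right. Two points of emphasis. First, the Talagrand route as you set it up terminates at $\mathbb{P}\{f(X)\ge M_f+s\}\le 2e^{-s^2/16}$ about a median $M_f$; recentring at the mean and removing the prefactor while recovering exactly $e^{-t^2/8}$ is not just bookkeeping, and this route should be regarded as giving a qualitatively equivalent but quantitatively weaker statement. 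Second, the entropy/Herbst route you describe at the end is the one that actually delivers the proposition as stated: for independent variables supported in an interval of length $b-a$ and a separately convex, $L$-Lipschitz $f$, convexity gives $\sum_i(Z-Z_i')_-^2\le \|\nabla f(X)\|^2\max_i(X_i-X_i')^2\le (b-a)^2L^2$, and the Herbst argument yields $\mathbb{P}\{f\ge\mathbb{E}f+s\}\le \exp\bigl(-s^2/(2(b-a)^2L^2)\bigr)$, which with $b-a=2$ and $s=Lt$ is exactly $e^{-t^2/8}$ centred at the mean. So your proposal is correct provided the final claim is carried by the log-Sobolev argument rather than by the convex-distance computation; as written, the first two-thirds of the proof is a (correct) detour that does not quite reach the stated constants.
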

The proof of the lemma then follows from defining such a convex, Lipschitz function.
\begin{proof}[Proof of Lemma \ref{lemma:mixingcoherence} \cite{Tropp2011ImprovedTransform}] Fix a row index $j\in\{1,2,\dots, m\}$ and define the function $$f(x) := \|e_j^*F\,\text{diag}(x)\,U\| = \|x^*EU\|,$$
where $E = \text{diag} (e_j^* F)$ is a diagonal matrix constructed from the $j$th row of $F$. Let $X$ be a vector of random signs. Since the diagonal of $D$ is also a vector of random signs, we can bound the norm of the $j$th row of $FDU$ by bounding $f(X)$ using Proposition \ref{prop.appproofcoherence}. We will first show that $f$ satisfies the conditions of Proposition \ref{prop.appproofcoherence}. Note that it is convex by the triangle inequality.  Since we have that $|F_{ij}|\leq\sqrt{\eta/m}$ for all $i,j$,  the entries of $E= \text{diag} (e_j^* F)$ all have magnitude less than or equal to $\sqrt{\eta/m}$ as well,  which means $\|E\|\leq \sqrt{\eta/m}$.  We use this to show the Lipschitz constant of $f$ is $L = \sqrt{\eta/m}$.
\begin{align*}
    |f(x) - f(y)| &=  | \|x^*EU \| - \|y^*EU \| | \leq \|(x-y)^*EU\| \leq  \|E\| \|x-y\|
\end{align*}
We bound the expected value of $f(X)$ as
\begin{align*}
    \mathbb{E}f(X) &\leq \left[\mathbb{E}f(X)^2 \right]^{1/2}  = \left[\mathbb{E}\|X^*EU \|^2 \right]^{1/2} = \|EU\|_F  \leq \|E\|\|U\|_F  \leq \sqrt{\frac{\eta}{m} r}.
\end{align*}
Now apply Proposition \ref{prop.appproofcoherence} with $t = \sqrt{8\log(m/\delta)}$ to find
\begin{equation*}
    \mathbb{P} \left\{\|e_j^*FDU\| \geq \sqrt{\frac{\eta}{m} r } + \sqrt{\frac{\eta}{m} 8\log(m/\delta)} \right\} \leq \frac{\delta}{m}.
\end{equation*}
We need this to hold for each of the $m$ rows.  By the union bound, we obtain
$$\mathbb{P} \left\{\max_{1\leq j\leq m}\|e_j^*FDU\| \geq \sqrt{\frac{\eta}{m}}(\sqrt{r} + \sqrt{8\log(m/\delta)}) \right\} \leq m\frac{\delta}{m} = \delta.$$
Finally, note that $\mu(FDU) = \max\nolimits_j\|e_j^*FDU\|^2$.
\end{proof}
We need to combine this result with Lemma 4.3 of \cite{boutsidis2013improved} which is a corollary to Lemma 3.4 of \cite{Tropp2011ImprovedTransform}. The proof can be found in the references. 
\begin{lemma}[Boutsidis and Gittens~\cite{boutsidis2013improved}]\label{lemma:samplingcoherence}
Let $U\in\F^{m\times r_1}$ have orthonormal columns and let $\mu(U)$ denote its coherence. Let $0<\tilde{\epsilon}<1$ and $0<\delta < 1$. Let $r_2$ be an integer such that
\begin{equation*}
    6\tilde{\epsilon}^{-2}m\mu(U)\log(r_1/\delta) \leq r_2 \leq m.
\end{equation*}
Let $S\in \R^{r_2\times m}$ sample $r_2$ rows uniformly at random and without replacement, and define $\mathcal{S} = \sqrt{\frac{m}{r_2}}S$. Then, with probability at least $1-2\delta$
\begin{equation*}
    \sqrt{1-\tilde{\epsilon}}\leq\sigma_{\min}(\mathcal{S}U)\leq\sigma_{\max} (\mathcal{S}U) \leq \sqrt{1+\tilde{\epsilon}}.
\end{equation*}
\end{lemma}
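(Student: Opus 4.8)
The plan is to recognise $(\mathcal S U)^{T}(\mathcal S U)$ as a scaled sum of random rank-one matrices and to control its extreme eigenvalues with a matrix Chernoff bound. Let $i_1,\dots,i_{r_2}$ denote the row indices selected by $S$ (a uniformly random $r_2$-subset of $\{1,\dots,m\}$ drawn without replacement). Since the rows of $\mathcal S U=\sqrt{m/r_2}\,SU$ are $\sqrt{m/r_2}\,U_{(i_k)}$, we have
\begin{equation*}
M:=(\mathcal S U)^{T}(\mathcal S U)=\frac{m}{r_2}\sum_{k=1}^{r_2}U_{(i_k)}^{T}U_{(i_k)} .
\end{equation*}
The conclusion $\sqrt{1-\epsilon}\le\sigma_{\min}(\mathcal S U)\le\sigma_{\max}(\mathcal S U)\le\sqrt{1+\epsilon}$ is exactly the pair of eigenvalue statements $1-\epsilon\le\lambda_{\min}(M)$ and $\lambda_{\max}(M)\le 1+\epsilon$, so it suffices to control $\lambda_{\min}(M)$ and $\lambda_{\max}(M)$. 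First I would record the two elementary facts that drive the argument: (i) each $i_k$ is marginally uniform on $\{1,\dots,m\}$, so $\mathbb E\big[U_{(i_k)}^{T}U_{(i_k)}\big]=\tfrac1m U^{T}U=\tfrac1m I_{r_1}$ and hence $\mathbb E\,M=I_{r_1}$; and (ii) each summand is positive semidefinite with $\big\|\tfrac{m}{r_2}U_{(i_k)}^{T}U_{(i_k)}\big\|=\tfrac{m}{r_2}\|U_{(i_k)}\|_2^{2}\le\tfrac{m}{r_2}\mu(U)=:R$ by the definition of coherence.

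Next I would invoke a matrix Chernoff inequality (in the without-replacement form used in~\cite{Tropp2011ImprovedTransform,boutsidis2013improved}). With ambient dimension $d=r_1$, with $\lambda_{\min}(\mathbb E\,M)=\lambda_{\max}(\mathbb E\,M)=1$, and with the uniform bound $R=m\mu(U)/r_2$ above, it gives, for $\epsilon\in(0,1)$,
\begin{equation*}
\mathbb P\{\lambda_{\max}(M)\ge 1+\epsilon\}\le r_1\Big[\tfrac{e^{\epsilon}}{(1+\epsilon)^{1+\epsilon}}\Big]^{1/R},\qquad
\mathbb P\{\lambda_{\min}(M)\le 1-\epsilon\}\le r_1\Big[\tfrac{e^{-\epsilon}}{(1-\epsilon)^{1-\epsilon}}\Big]^{1/R}.
\end{equation*}
I would then use the scalar estimates $\tfrac{e^{\epsilon}}{(1+\epsilon)^{1+\epsilon}}\le e^{-\epsilon^2/3}$ and $\tfrac{e^{-\epsilon}}{(1-\epsilon)^{1-\epsilon}}\le e^{-\epsilon^2/2}\le e^{-\epsilon^2/3}$ on $(0,1)$ (each follows from monotonicity of $\epsilon\mapsto\pm\epsilon-(1\pm\epsilon)\log(1\pm\epsilon)$ together with a Taylor estimate at $0$), so that each tail is at most $r_1\exp\!\big(-\epsilon^2 r_2/(3m\mu(U))\big)$. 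The hypothesis $6\epsilon^{-2}m\mu(U)\log(r_1/\delta)\le r_2$ is precisely what makes $\epsilon^2 r_2/(3m\mu(U))\ge 2\log(r_1/\delta)$, so each tail is at most $r_1(\delta/r_1)^2\le\delta$. A union bound over the two tails gives failure probability at most $2\delta$; on the complementary event all eigenvalues of $M$ lie in $[1-\epsilon,1+\epsilon]$, and taking square roots yields the claimed bounds on $\sigma_{\min}(\mathcal S U)$ and $\sigma_{\max}(\mathcal S U)$.

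The only genuinely delicate point is the \emph{without-replacement} sampling: the summands $U_{(i_k)}^{T}U_{(i_k)}$ are not independent, so the standard matrix Chernoff bound (stated for independent summands) does not apply verbatim. I would handle this by citing the without-replacement version of the matrix Chernoff bound as it appears in~\cite{Tropp2011ImprovedTransform}; equivalently, one first proves the inequality for i.i.d.\ sampling with replacement and then transfers it via the classical fact that, for a convex function, the expectation of a sample sum drawn without replacement is dominated by that of the corresponding with-replacement sum, a comparison that lifts to the matrix Laplace-transform method. Everything else---the expectation computation, the per-term norm bound via coherence, the two scalar tail estimates, and the bookkeeping with the choice of $r_2$---is routine. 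Since Lemma~\ref{lemma:samplingcoherence} is quoted from~\cite{boutsidis2013improved}, a complete proof may simply be deferred to that reference, with the sketch above recording the mechanism.
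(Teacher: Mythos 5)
The paper does not prove this lemma itself---it explicitly defers to Boutsidis--Gittens and Tropp (``The proof can be found in the references'')---and your sketch is a correct reconstruction of exactly the argument used there: write $(\mathcal S U)^{T}(\mathcal S U)$ as a normalized sum of rank-one PSD matrices with expectation $I_{r_1}$ and per-term norm at most $m\mu(U)/r_2$, apply the matrix Chernoff bound in its without-replacement form, and convert the Chernoff factors to $e^{-\epsilon^2/3}$ so that the hypothesis on $r_2$ delivers failure probability $\delta$ per tail. Your arithmetic checks out and you correctly isolate the one delicate point (the dependence induced by sampling without replacement, handled via Tropp's Lemma 3.4 or the Hoeffding/Gross--Nesme domination argument), so the proposal is correct and takes the same route as the paper's cited sources.
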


\bibliographystyle{abbrv} 
\bibliography{bib2}

\end{document}